	\titlespacing{\section}{0pt}{12pt}{0pt}
	\titlespacing{\subsection}{0pt}{6pt}{0pt}
\long\def\@footnotetext#1{%
\H@@footnotetext{%
\ifHy@nesting
\hyper@@anchor{\@currentHref}{#1}%
\else
\Hy@raisedlink{\hyper@@anchor{\@currentHref}{\relax}}#1%
\fi
}}
\def\@footnotemark{%
\leavevmode
\ifhmode\edef\@x@sf{\the\spacefactor}\nobreak\fi
\H@refstepcounter{Hfootnote}%
\hyper@makecurrent{Hfootnote}%
\hyper@linkstart{link}{\@currentHref}%
\@makefnmark
\hyper@linkend
\ifhmode\spacefactor\@x@sf\fi
\relax
}%
\renewcommand*\@footnotemark{%
\leavevmode
\ifhmode
\edef\@x@sf{\the\spacefactor}%
\FN@mf@check
\nobreak
\fi
\H@refstepcounter{Hfootnote}%
\hyper@makecurrent{Hfootnote}%
\hyper@linkstart{link}{\@currentHref}%
\@makefnmark
\hyper@linkend
\ifFN@pp@towrite
\FN@pp@writetemp
\FN@pp@towritefalse
\fi
\FN@mf@prepare
\ifhmode\spacefactor\@x@sf\fi
\relax%
}%
\theoremstyle{plain}
\newtheorem{theorem}{Theorem}[section]
\newtheorem{lemma}[theorem]{Lemma}
\theoremstyle{definition}
\newtheorem{remark}[theorem]{Remark}
\long\def\symbolfootnote[#1]#2{\begingroup%
\def\thefootnote{\fnsymbol{footnote}}\footnote[#1]{#2}\endgroup}
\def\blfootnote{\xdef\@thefnmark{}\@footnotetext}
\begin{document}

{\Large \bfseries \sc Growth rates of Coxeter groups and Perron numbers\\}

{\bfseries Alexander Kolpakov and Alexey Talambutsa\\}

{\em Abstract.}
We define a large class of abstract Coxeter groups, that we call $\infty$--spanned, and for which the word growth rate and the geodesic growth rate appear to be Perron numbers. This class contains a fair amount of Coxeter groups acting on hyperbolic spaces, thus corroborating a conjecture by Kellerhals and Perren. We also show that for this class the geodesic growth rate strictly dominates the word growth rate.\\
{\em MSC 2010:} Primary: 20F55. Secondary: 37B10, 11K16. \\
{\em Key words:} word growth, geodesic growth, growth rate, Coxeter group, Perron number.\\
\vspace{0.25in}

\section{Introduction}\label{sec:intro}

A Coxeter group $G$ of rank $n$ is an abstract group that can be defined by the generators $S = \{ s_1, s_2, \dots, s_n \}$ and relations as follows:
\begin{equation}
G = \langle  s_1, s_2, \dots, s_n\, |\, s^2_i = 1, (s_i s_j)^{m_{ij}} = 1,\, 1 \leq i < j \leq n \rangle,
\end{equation}
for all $1 \leq i \leq n$, and $m_{ij} \in \{2, 3, \dots \} \cup \{ \infty \}$, for all $1 \leq i < j \leq n$. No relation is present between $s_i$ and $s_j$ if and only if $m_{ij} = \infty$.

Such a group can be conveniently described by its Coxeter diagram $\mathcal{D}$, which is a labelled graph, where each vertex $i$ corresponds to a generator $s_i$ of $G$, with $i$ and $j$ connected by an edge whenever $m_{ij} \geq 3$. Moreover, if $m_{ij} \geq 4$ then the edge joining $i$ and $j$ has label $m_{ij}$, while for $m_{ij} = 3$ it remains unlabelled.

We say that $G$ is \textit{$\infty$--spanned}, if the Coxeter diagram for $G$ has a spanning tree with edges labelled only $\infty$ and contains more than $2$ vertices. The corresponding Coxeter diagram will be also called $\infty$--spanned. In particular, neither the finite group $C_2$ with two elements, nor the infinite dihedral group $D_\infty$ are considered $\infty$--spanned.  Any other infinite right-angled Coxeter group that cannot be decomposed as a direct product is an $\infty$--spanned group.

Given a Coxeter group of rank $n$ with generating set $S = \{ s_1, s_2, \dots, s_n \}$ of involutions, called its \textit{standard generating set}, let us consider its Cayley graph $\mathrm{Cay}(G, S)$ with the identity element $e$ as origin and the word metric $d(g,h) = $ ``the least length of a word in the alphabet $S$ necessary to write down $gh^{-1}$''. Let the \textit{word length} of an element $g \in G$ be $d(e, g)$. Then, let $w_k$ denote the number of elements in $G$ of word length $k \geq 0$ (assuming that $w_0 = 1$, so that the only element of zero word length is $e$). Also, let $g_k$ denote the number of geodesic paths in the graph $\mathrm{Cay}(G, S)$ of length $k \geq 0$ issuing from $e$ (with a unique geodesic
of length $0$ being the point $e$ itself, and thus $g_0 = 1$).

The \textit{word growth series} of $G$ with respect to its standard generating set $S$ is
\begin{equation}
\omega_{(G, S)}(z) = \sum^\infty_{k=0} w_k z^k,
\end{equation}
while the \textit{geodesic growth series} of $G$ with respect to $S$ is
\begin{equation}
\gamma_{(G, S)}(z) = \sum^\infty_{k=0} g_k z^k.
\end{equation}

Since we shall always use a fixed standard generating $S$ set for $G$ in the sequel, and mostly refer to the Coxeter diagram defining $G$, rather than $G$ itself, we simply write $\omega_{G}(z)$ and $\gamma_{G}(z)$ for its word and geodesic generating series. As well, by saying that $G$ is $\infty$--spanned we shall refer to its Coxeter diagram.

The limiting value $\omega(G) = \limsup_{k\to \infty} \sqrt[k]{w_k}$ is called the word growth rate of $G$, while $\gamma(G) = \limsup_{k\to \infty} \sqrt[k]{g_k}$ is called the geodesic growth rate of $G$. Note that these two upper limits can be replaced by the usual limits according to Fekete's lemma, which guarantees that the limit $\lim_{k\to \infty}\sqrt[k]{a_k}$ exists for a non-negative sequence $a_k$ that is submultiplicative, i.e. such that $a_{m+n}\leq a_m a_n$ for all $m, n\geq 0$.

Both growth series above are known to be rational functions, since the corresponding sets $\mathrm{ShortLex}(G) = $ ``words over the alphabet $S$ in shortest left-lexicographic form representing all elements of $G$'' (equivalently, the language of shortlex normal forms for $G$ with its standard presentation) and $\mathrm{Geo}(G) = $``words over the alphabet $S$ corresponding to labels of all possible geodesics in $\mathrm{Cay}(G, S)$ issuing from $e$'' (equivalently, the language of reduced words in $G$ with its standard presentation) are regular languages. That is, there exist deterministic finite-state automata $\mathsf{ShortLex}$ and $\mathsf{Geo}$ that accept the eponymous languages. We shall use the automata due to Brink and Howlett \cite{BH},  who provide a universal construction for all Coxeter groups. In addition to the original work \cite{BH}, the $\mathsf{ShortLex}$ automaton is described in great detail in \cite{Casselman1, Casselman2}, and the $\mathsf{Geo}$ automaton in \cite{BB}. In the present work, we use those latter descriptions.

Given a finite automaton $A$ over an alphabet $S$, let $L = L(A)$ be its accepted language. If $v_k$ is the number of length $k \geq 0$ words over $S$ that belong to $L$, then the quantity $\lambda(A) = \limsup_{k\to \infty} \sqrt[k]{v_k}$ is called the growth rate of the (regular) language $L(A)$. From the above discussion we have $\omega(G) = \lambda(\mathrm{ShortLex})$ and $\gamma(G) = \lambda(\mathrm{Geo})$.

Growth rates of many classes of Coxeter groups are known to belong to classical families of algebraic integers, in particular, to \textit{Perron numbers}. Moreover, growth rates of Coxeter groups acting cocompactly on hyperbolic space $\mathbb{H}^d$, for $d \geq 4$, are specifically conjectured to be Perron numbers by Kellerhals and Perren \cite{KePe}. Here, following Lind's work \cite{L}, we define \textit{a Perron number} to be a real algebraic integer $\tau \geq 1$ with all its other Galois conjugates being strictly less than $\tau$ in absolute value. Perron numbers often appear in the context of harmonic analysis \cite{Bertin}, dynamical systems \cite{LM}, arithmetic groups \cite{ERT}, and many others.

It follows from the results of \cite{Floyd, Parry, Yu1, Yu2} that the growth rates of Coxeter groups acting on $\mathbb{H}^2$ and $\mathbb{H}^3$ with finite co-volume are Perron numbers. The main goal of this paper is to prove the following theorem, that partially confirms the aforementioned conjecture, and also extends to the case of geodesic growth rates.

\begin{theorem}\label{thm:Perron}
Let $G$ be an $\infty$--spanned Coxeter group. Then $\omega(G)$ and $\gamma(G)$ are Perron numbers such that $\omega(G)\geq (1+\sqrt5)/2$ and $\gamma(G)\geq \gamma_0$, where $\gamma_0\approx 1.769$ is the real root of the polynomial $x^3-2x-2$.
\end{theorem}

The previous works \cite{BrKe, Floyd, KePe, Parry, Yu1, Yu2} make use of Steinberg's formula \cite{Steinberg} as their main tool to investigate the combinatorial composition of the growth function $\omega_G(z)$, also invoking the classification of vertex stabilisers for the corresponding fundamental domains in $\mathbb{H}^n$,  $n = 2, 3$. Such classification becomes much more complicated in higher dimensions and the number of terms in Steinberg's formula grows fast for higher rank Coxeter groups, which makes this approach hardly tractable.

Our method is different and does not rely on the geometric action of the group. Instead, it makes essential use of Perron--Frobenius theory applied to the adjacency matrices of the finite automata \textsf{Geo} and \textsf{ShortLex}. Below we recall the setting of Perron--Frobenius theory and remind the classical result which we use.

A non-negative matrix $M$ is called \textit{reducible} if there exists a permutation matrix $P$ such that $PMP^{-1}$ has an upper-triangular block form. Otherwise, $M$ is called \textit{irreducible (or indecomposable)}. If $M$ is the adjacency matrix of a directed graph $D$, then $M$ is irreducible if and only if $D$ is strongly connected. The \textit{$i$--th period} ($1 \leq i \leq n$) of a non-negative matrix $M$ is the greatest common divisor of all natural numbers $d$ such that $(M^d)_{ii} > 0$. If $M$ is irreducible, then the periods of $M$ are all equal to \textit{the period of} $M$. A non-negative matrix is called \textit{aperiodic} if it has period $1$.
A non-negative matrix that is irreducible and aperiodic is called \emph{primitive}.
The classical Perron-Frobenius theorem implies that the largest real eigenvalue of a square $n\times n$ ($n \geq 2$) non-negative primitive integral matrix is a Perron number, cf. \cite[Theorem~4.5.11]{LM}.

The combinatorial condition on the Coxeter diagram being $\infty$--spanned is easy to verify even for rather large diagrams. This allows us to apply our results to reflection groups of higher rank, such as the Kaplinskaya--Vinberg example in $\mathbb{H}^{19}$, cf. Section~\ref{sec:geom}. This work expands on the results of \cite{KoTa} and promotes them to much greater generality. To our best knowledge, however, there is no Coxeter group known with growth rate that is not a Perron number (provided it exceeds $1$).

Another question that comes about naturally  is the number $\gamma_G(g)$ of geodesics in $\mathrm{Cay}(G, S)$ issuing from the neutral element $e$ of $G$ and arriving to a given element $g \in G$. It is clear that $\gamma_G(g)$ heavily depends on $g\in G$: if a right-angled Coxeter group $G$ contains the direct product $D_\infty \times D_\infty$ as a parabolic subgroup, then some elements will have a unique geodesic representative, while some will have exponentially many depending on their word length. Nevertheless, the average number of geodesics that represent an element of word length $k$, i.e. the ratio $\frac{g_k}{w_k}$, can be analysed.

\begin{theorem}\label{thm:geodesic}
Let $G$ be an $\infty$--spanned Coxeter group which is not a free product $C_2 * \ldots * C_2$. Then $\frac{g_k}{w_k} \sim \nu \cdot \delta^k(G)$ asymptotically\footnote{Here by writing $a_k \sim b_k$ for two  sequences of positive real numbers indexed by integers, we mean $\lim_{k\to \infty}\frac{a_k}{b_k} = 1$.}, as $k\rightarrow \infty$, with $\nu = \nu(G) > 0$ a positive constant, and $\delta(G) = \frac{\gamma(G)}{\omega(G)} > 1$. In particular, $\gamma(G)$ always strictly dominates $\omega(G)$.
\end{theorem}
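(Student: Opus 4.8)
The plan is to attack both growth series simultaneously via the Brink--Howlett automata $\mathrm{ShortLex}$ and $\mathrm{Geo}$, whose transition matrices $M_{\mathrm{SL}}$ and $M_{\mathrm{Geo}}$ govern the coefficients $w_k$ and $g_k$. The first step is to record that $\mathrm{ShortLex}$ is a sub-automaton of $\mathrm{Geo}$ (every short-lex normal form is a reduced word), so that on the level of transition graphs $M_{\mathrm{SL}}$ embeds into $M_{\mathrm{Geo}}$ as a principal submatrix obtained by restricting to the states reachable by short-lex words; consequently $\omega(G)\le\gamma(G)$ is immediate and the real content is the \emph{strict} inequality together with the precise asymptotic constant. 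The core idea is that for an $\infty$-spanned $G$ (not the free product of $\mathbb{Z}_2$'s), both automata have a \emph{unique} dominant strongly connected component, and — crucially — it is \emph{the same} component for $\mathrm{Geo}$ as the one controlling a strictly larger count: I would isolate the strongly connected component $C$ of $\mathrm{Geo}$ whose Perron eigenvalue equals $\gamma(G)$, show it is aperiodic (primitivity of the associated nonnegative matrix), and show it is reachable from the start state and co-reachable, so that $g_k\sim c\,\gamma(G)^k$ with $c>0$ by the standard Perron--Frobenius asymptotics for primitive nonnegative matrices. The analogous statement for $\mathrm{ShortLex}$ gives $w_k\sim c'\,\omega(G)^k$.

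Next I would prove $\gamma(G)>\omega(G)$. The $\infty$-spanning tree provides, by hypothesis, at least one edge labelled $\infty$ sitting inside a diagram with $\ge 3$ vertices; pick two generators $a,b$ with $m_{ab}=\infty$ and a third generator $c$ adjacent (in the tree) to one of them. In the subgroup $\langle a,b,c\rangle$ one already finds elements admitting more than one reduced expression — e.g. whenever $c$ commutes with $a$ one has $ac=ca$, but more to the point one can build, using the free factor $\langle a,b\rangle\cong\mathbb{Z}_2*\mathbb{Z}_2$ interleaved with $c$, arbitrarily long elements with a number of geodesics growing geometrically faster than their count. Quantitatively, I would exhibit an explicit "gadget" subword that, inserted repeatedly into short-lex normal forms, multiplies the geodesic count by a factor $>1$ per insertion without changing length more than linearly; this forces the Perron root of $M_{\mathrm{Geo}}$ on its dominant component to strictly exceed that of $M_{\mathrm{SL}}$. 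Equivalently, and perhaps cleaner: the dominant component of $\mathrm{Geo}$ strictly contains (as a set of states, with strictly more edges) a copy of the dominant component of $\mathrm{ShortLex}$, and for primitive nonnegative integer matrices a proper nonnegative "enlargement" that keeps the matrix irreducible strictly increases the Perron eigenvalue — this is the standard monotonicity $\rho(A)<\rho(B)$ when $0\le A\le B$, $A\ne B$, $B$ irreducible.

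Finally I would assemble the ratio: from $w_k\sim c'\,\omega(G)^k$ and $g_k\sim c\,\gamma(G)^k$ we get $g_k/w_k\sim (c/c')\,\delta(G)^k$ with $\delta(G)=\gamma(G)/\omega(G)>1$, which is almost the claim; but the statement asks for $g_k\sim\delta^k(G)\,w_k$ on the nose, i.e. with constant exactly $1$ in front. To get the constant to be $1$ one must argue that the leading constants $c$ and $c'$ coincide, which should follow from a bijective/structural correspondence: every short-lex normal form is itself one of the geodesics counted, and the "extra" geodesics for a fixed element $g$ are accounted for by the states that the $\mathrm{Geo}$-automaton visits beyond the $\mathrm{ShortLex}$-skeleton; tracking the contribution of the start-vector and the left Perron eigenvector through the embedding $M_{\mathrm{SL}}\hookrightarrow M_{\mathrm{Geo}}$ should show the two amplitudes agree up to the geometric factor already extracted. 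The main obstacle I anticipate is exactly this last bookkeeping — verifying primitivity (aperiodicity) of the dominant component of $\mathrm{Geo}$ for \emph{every} $\infty$-spanned diagram, and then pinning the leading constant to be precisely $1$ rather than merely a positive constant; the first likely uses the $\infty$-edges to produce short cycles of coprime lengths in the automaton, and the second likely uses that the $\mathrm{ShortLex}$ language, being a set of unique representatives, injects into $\mathrm{Geo}$ in a length-preserving way that is also "eventually surjective onto the dominant stratum" in the appropriate sense.
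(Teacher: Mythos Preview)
Your central structural claim---that $M_{\mathrm{SL}}$ embeds into $M_{\mathrm{Geo}}$ as a principal submatrix, or equivalently that the dominant component of $\mathrm{Geo}$ ``strictly contains a copy'' of that of $\mathrm{ShortLex}$---is false, and this is where the proposal breaks down. The two automata do \emph{not} share a state space: by construction the $\mathrm{ShortLex}$ transition adds the extra roots $\sigma_i(\alpha_j)$ for $j<i$, so reading the same word $w$ one has $\widehat\delta_{\mathrm{Geo}}(\emptyset,w)\subseteq\widehat\delta_{\mathrm{SL}}(\emptyset,w)$ as subsets of $\Sigma$, but these are in general different states, and neither state set sits inside the other. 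There is thus no principal-submatrix comparison available, and the monotonicity $\rho(A)<\rho(B)$ for $0\le A\le B$, $A\ne B$, $B$ irreducible cannot be invoked in the way you suggest. The ``gadget subword'' alternative you sketch is too vague to carry the argument; in particular note that the source of multiple geodesics is a \emph{finite} label $m_{ij}<\infty$ (guaranteed by the non-free-product hypothesis), not the $\infty$-edges themselves.

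The paper circumvents the missing embedding by building an \emph{intermediate} automaton $A'$ with $L(\mathrm{ShortLex})\subsetneq L(A')\subseteq L(\mathrm{Geo})$: using a finite label $m=m_{13}$ one writes down explicit words $w\in L(\mathrm{ShortLex})$ and $w''\in L(\mathrm{Geo})\setminus L(\mathrm{ShortLex})$ representing the same group element, then grafts a new path labelled $w''$ onto $\mathrm{ShortLex}$, rewiring so that $A'\setminus\{\emptyset\}$ is strongly connected. Removing the single final edge of that path yields $A''$ with $L(A'')=L(\mathrm{ShortLex})$; now $M(A''\setminus\{\emptyset\})$ and $M(A'\setminus\{\emptyset\})$ \emph{are} same-size irreducible matrices with $M''\le M'$ and $M''\ne M'$, so monotonicity gives $\omega(G)=\lambda(A'')<\lambda(A')\le\gamma(G)$. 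Your instinct about the leading constant is well taken: the paper's proof in fact only establishes the strict inequality $\gamma(G)>\omega(G)$, relying on the primitivity obtained in Theorem~1.1 to get $w_k\sim c'\omega(G)^k$ and $g_k\sim c\,\gamma(G)^k$; the statement $g_k\sim\delta(G)^k w_k$ should be read with an implicit positive constant rather than literally as $c=c'$.
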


The paper is organised as follows: in Section \ref{automata:construction} we describe the deterministic finite-state automata accepting the languages $\mathsf{ShortLex}$ and $\mathsf{Geo}$ (their construction is first given in \cite{BH} the paper by Brink and Howlett), and show some of their properties, essential for the subsequent proofs, in Section \ref{automata:properties}. Then, in Section \ref{proofs}, we prove Theorems \ref{thm:Perron} and \ref{thm:geodesic}. Finally, a few geometric applications are given in Section~\ref{sec:geom}.

\begin{center}
\textsc{Acknowledgements}\\
\end{center}
\noindent
{\small A.K. was partially supported by the Swiss National Science Foundation (project no.~PP00P2-170560) and the Russian Federation Government (grant no. 075-15-2019-1926). A.T. was partially supported by the Russian Foundation for Basic Research, projects no.~18-01-00822 and no.~18-51-05006. The authors would like to thank Alexander A. Gaifullin, Ruth Kellerhals and Tatiana Smirnova-Nagnibeda for stimulating discussions. Also, A.T. would like to thank the University of Neuch\^{a}tel for hospitality during his visit in January 2020. The authors are grateful to the anonymous referees for their remarks and suggestions that helped improving this paper.}

\section{Brink and Howlett's automata and their properties}

In this section we briefly recall the general construction of the automata $\mathsf{ShortLex}$ and $\mathsf{Geo}$ that accept, respectively, the shortlex and geodesic languages for an arbitrary Coxeter group $G$ with standard generating set $S = \{ s_1, s_2, \dots, s_n \}$. Then we shall concentrate on some combinatorial and dynamical properties of those automata in the case when $G$ is $\infty$--spanned. For the background on automata and regular languages and their usage for Coxeter groups we refer the reader to the book \cite{BB}.

\subsection{Constructing the automata}\label{automata:construction}

Let $G$ be a Coxeter group with generators $S = \{ s_1, s_2, \dots, s_n \}$ and presentation
\begin{equation}
G = \langle  s_1, s_2, \dots, s_n\, |\, (s_i s_j)^{m_{ij}} = 1, \mbox{ for } 1 \leq i, j \leq n  \rangle,
\end{equation}
where we assume that $m_{ii} = 1$, for all $1 \leq i \leq n$, and $m_{ij} = m_{ji} \in \{2, 3, \dots \} \cup \{ \infty \}$, for all $1 \leq i < j \leq n$.

Let $V = \mathbb{R}^n$, and let $\{ \alpha_1, \dots, \alpha_n \}$ be a basis in $V$, called the set of \textit{simple roots} of $G$. The associated symmetric bilinear form $(u \,|\, v)$ on $V\times V$ is defined by
\begin{equation}
(\alpha_i \,|\, \alpha_j) = - \cos \frac{\pi}{m_{ij}}, \mbox{ for all } 1 \leq i, j \leq n.
\end{equation}
Let, for each $s_i \in S$, the corresponding \textit{simple reflection} in the hyperplane $H_i$ orthogonal to the root $\alpha_i$ be defined as
\begin{equation}
\sigma_i(v) = v - 2 (v \,|\, \alpha_i) \alpha_i, \mbox{ for } 1 \leq i \leq n.
\label{eq:reflection-formula}
\end{equation}
Then the representation $\rho: G \rightarrow GL(V)$ given by
\begin{equation}
\rho(s_i) = \sigma_i, \mbox{ for } 1 \leq i \leq n,
\end{equation}
is a faithful linear representation of $G$ into the group of linear transformations of $V$, called \textit{the geometric representation}, cf. \cite[\S 4.2]{BB}.

Let us define the set $\Sigma$ of \textit{small roots}\footnote{Small roots are called \textit{minimal roots} in \cite{Casselman1, Casselman2} due to their minimality with respect to the dominance relation introduced in the original paper \cite{BH}.} of $G$ as the minimal (by inclusion) subset of vectors in $V$ satisfying the following conditions:
\begin{itemize}
\item $\alpha_i \in \Sigma$, for each $1 \leq i \leq n$;
\item if $v \in \Sigma$, then $\sigma_i(v) \in \Sigma$, \, for all $1\leq i \leq n$ such that $-1 < (v|\alpha_i) < 0$.
\end{itemize}
In other words, all simple roots of $G$ are small, and if $v$ is a small root of $G$, then $u = \sigma_i(v)$ is also a small root provided that the $i$--th coordinate of $u$ is strictly bigger than the $i$--th coordinate of $v$, and the (positive) difference is strictly less than $2$. Note that each $v \in \Sigma$ is a non-trivial linear combination $\sum^n_{i=1} c_i\,\alpha_i$ with non-negative coefficients $c_i \geq 0$.

The set $\Sigma$ of small roots is known to be finite \cite[Theorem~4.7.3]{BB}. In particular, if $\alpha_i$ and $\alpha_j$ ($i \neq j$) are such two roots that $m_{ij} = \infty$, then $\sigma_i(\alpha_j)$ is \textit{not} a small root. Thus, if $G$ is $\infty$--spanned, we would expect it to have ``not too many'' small roots, so that a more precise combinatorial analysis of the latter becomes possible.

The set of $\mathrm{ShortLex}$ words, as well as the set $\mathrm{Geo}$ of geodesic words, in $G$ are regular languages by \cite[Theorem~4.8.3]{BB}. Each is accepted by the corresponding finite automaton that we shall call, with slight ambiguity, $\mathsf{ShortLex}$ and $\mathsf{Geo}$, respectively. Their states (besides a single state $\star$) are subsets of $\Sigma$ and their transition functions can be described in terms of the action of generating reflections $\sigma_i$, as follows.

For $\mathsf{Geo}$, the start state is $\{ \emptyset \}$, the fail state is $\star$, and the transition function $\delta(D, s_i)$, for a state $D$ and a generator $s_i$, $i=1,\dots,n$, is defined by
\begin{itemize}
\item $\delta(D, s_i) = \star$, if $\alpha_i \in D$ or $D = \star$, or otherwise
\item $\delta(D, s_i) = \{ \alpha_i \} \cup (\{ \sigma_i(v), \, v \in D \} \cap \Sigma)$.
\end{itemize}
All states of $\mathsf{Geo}$, except for $\star$, are accept states. The entire set of states can be obtained by applying the transition function inductively to the start state and its subsequent images. Then the fact that $\Sigma$ is finite \cite[Theorem~4.7.3]{BB} guarantees that the resulting set of states is finite.

For $\mathsf{ShortLex}$, the start state is $\{ \emptyset \}$, the fail state is $\star$, and the transition function $\delta(D, s_i)$, for a state $D$ and a generator $s_i$, $i=1,\dots,n$, is given by
\begin{itemize}
\item  $\delta(D, s_i) = \star$, if $\alpha_i \in D$ or $D = \star$, or otherwise
\item $\delta(D, s_i) = \{ \alpha_i \} \cup \left( \{ \sigma_i(v), \, v \in D \} \cup \{ \sigma_i(\alpha_j), \, j<i \}  \right) \cap \Sigma$.
\end{itemize}
All states of $\mathsf{ShortLex}$,
except for $\star$, are accept states. Again, all other states of $\mathsf{ShortLex}$ can be obtained inductively from the start state.

The enhanced transition function of $\mathsf{ShortLex}$ or $\mathsf{Geo}$ automaton from a state $D$ upon reading a length $l \geq 1$ word $w$ over the alphabet $S$ will be denoted by $\widehat{\delta}(D, w)$. It is inductively defined by first setting $\widehat{\delta}(D, s_i) = \delta(D, s_i)$, for all $i=1, \dots, n$ and, if $l\geq 2$, by $\widehat{\delta}(D, w) = \delta(\widehat{\delta}(D, w'), s_i)$, where $w = w' s_i$ for a word $w'$ of length $l-1$ and a generator $s_i$ with some appropriate $i \in \{1, 2, \dots, n\}$.

We  refer the reader to the original work \cite{BH}, and also the subsequent works \cite{Casselman1, Casselman2} for more detail on the above constructions. A very informative description of geodesic automata can be found in \cite[\S 4.7--4.8]{BB}.

For the sake of convenience, we shall omit the fail state $\star$ and the corresponding transitions in all our automata. This will make many computations in the sequel simpler, since we care only about the number of accepted words.

\subsection{Auxiliary lemmas}\label{automata:properties}

If $\Gamma$ is a tree, i.e. a connected graph without closed paths of edges, a vertex of $\Gamma$ having degree $1$ is called a leaf of $\Gamma$. The set of leaves of $\Gamma$, which is denoted by $\partial \Gamma$, is called the boundary of $\Gamma$.

\begin{lemma}[Labelling lemma]\label{lemma:labelling}
Let $\mathcal{D}$ be an $\infty$--spanned diagram with vertices $\{1, 2, \dots, n\}$, with $n\geq 3$, and $\Gamma \subset \mathcal{D}$ be its spanning tree all of whose edges have labels $\infty$. Then, up to a renumbering of vertices, we may assume that $\Gamma$ contains the edges $1 \rightarrow 2$ and $2 \rightarrow 3$, and for any non-recurring path $i_0 = 1 \rightarrow i_1 \rightarrow i_2 \rightarrow \dots \rightarrow i_k$ inside $\Gamma$, such that $i_k \in \partial \Gamma$, we have $i_0 < i_1 < i_2 < \dots < i_k$.
\end{lemma}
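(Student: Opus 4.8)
The plan is to obtain the desired numbering by running a breadth-first (or depth-first) search on the spanning tree $\Gamma$ starting from a carefully chosen root, and then numbering the vertices in the order they are discovered. First I would fix the root of $\Gamma$ to be a vertex that has a neighbour of degree at least $2$ in $\Gamma$: since $n \geq 3$ and $\Gamma$ is a tree, not every vertex can be a leaf, so there is an edge of $\Gamma$ one of whose endpoints, call it temporarily $r$, is adjacent to a vertex $r'$ with $\deg_\Gamma(r') \geq 2$. Root $\Gamma$ at $r$, and pick $r'$ to be the first child visited; then $r'$ has a further child $r''$ in the rooted tree. Number $r \mapsto 1$, $r' \mapsto 2$, $r'' \mapsto 3$ as the first three vertices discovered, so that the edges $1\rightarrow 2$ and $2 \rightarrow 3$ indeed lie in $\Gamma$.

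The core of the argument is the monotonicity statement. I would number the remaining vertices $4, 5, \dots, n$ by continuing the traversal, always assigning the next available integer to the next vertex popped off the search frontier, with the single constraint that we finish exploring the subtree rooted at $r''$ (and at $r'$) in a way consistent with a rooted traversal of $\Gamma$. Because every edge of $\Gamma$ in the rooted orientation goes from a parent (discovered earlier, hence smaller label) to a child (discovered later, hence larger label), any path that descends monotonically away from the root $1$ — in particular any simple path from $1$ to a leaf $i_k \in \partial\Gamma$, since in a rooted tree the unique simple path from the root to any vertex is a descending path — visits vertices with strictly increasing labels $i_0 = 1 < i_1 < \dots < i_k$. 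This is just the standard fact that a DFS/BFS preorder numbering of a rooted tree is increasing along every root-to-node path, applied to $\Gamma$; the hypothesis that all edges of $\Gamma$ carry label $\infty$ is not used here, it only guarantees that such a spanning tree exists and that renumbering the vertices of $\mathcal{D}$ does not disturb the $\infty$-labels of $\Gamma$.

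The only mild subtlety — and the place where a little care is needed rather than a genuine obstacle — is ensuring simultaneously that (a) $1\rightarrow 2$ and $2\rightarrow 3$ are edges of $\Gamma$ and (b) the traversal is a legitimate rooted traversal from $1$, so that the monotonicity conclusion applies to \emph{every} leaf, including leaves that are descendants of $2$ or $3$. Choosing the root $1$ to have a non-leaf neighbour $2$, and $2$ to have a child $3$, is exactly what reconciles these: such a configuration exists precisely because a tree on $n\geq 3$ vertices has a vertex of degree $\geq 2$ with a neighbour that is not forced to be a leaf (equivalently, $\Gamma$ is not a star with all non-central vertices leaves \emph{and} has $n \geq 3$, so we may take $1$ to be a leaf adjacent to the centre $2$, and $3$ any other neighbour of $2$; if $\Gamma$ is not a star the choice is even easier). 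One then runs the preorder traversal from $1$ through $2$, $3$, and onward, and reads off the numbering. This completes the proof.
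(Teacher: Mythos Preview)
Your argument is correct and takes a route different from the paper's. The paper builds the labelling \emph{from the outside in}: after fixing a length-two path $1$--$2$--$3$ in $\Gamma$, it repeatedly peels off the current set of leaves and assigns them the largest unused labels, working down from $n$ to $4$. You build it \emph{from the root outward}: root $\Gamma$ at $1$ (with $2$ a non-leaf neighbour and $3$ a further child of $2$) and number the vertices $1,2,3,\dots,n$ in traversal preorder. Either construction makes every parent--child edge of the rooted tree point from a smaller label to a larger one, so root-to-leaf paths are increasing. Your version has the advantage that this monotonicity is the very definition of a preorder numbering; the paper's leaf-stripping needs the tacit convention that the already-named vertices $1,2,3$ are exempt from removal (without that convention a long branch hanging off $2$ can pick up non-monotone labels once $1$, $3$, and then $2$ are stripped away).

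One small wrinkle in your write-up: if the root $r$ is allowed to have several neighbours, a \emph{breadth}-first search will discover all of them before it reaches $r''$, so $r''$ is not the third vertex found. Your DFS alternative handles this without fuss (visit $r'$ first among $r$'s children and $r''$ first among $r'$'s), and in your last paragraph you in any case note that one may take $1$ to be a leaf of $\Gamma$, after which either traversal yields $1,2,3$ as the first three vertices. Committing to DFS throughout, or fixing $1$ as a leaf from the outset, would streamline the exposition.
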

\begin{proof}
We explicitly construct the desired enumeration. Choose two edges forming a connected sub-tree of $\Gamma$ and label their vertices $1$, $2$ and $3$, such that vertex $2$ is between the vertices $1$ and $3$. Then start labelling the leaves in $\partial \Gamma$ by assigning numbers to them down from $n$. When all the leaves are labelled, form a new tree $\Gamma^\prime = \Gamma - \partial \Gamma$, and label the leaves in $\partial \Gamma^\prime$, and so on, until no unused labels remain.
\end{proof}

From now on, we shall suppose that every $\infty$--spanned diagram with $3$ or more vertices already has a labelling satisfying Lemma~\ref{lemma:labelling}. Such a labelling will become handy later on. By $\Gamma$ we will be denoting the corresponding spanning tree.

\begin{lemma}[Hiking lemma]\label{lemma:hiking}
Let $D' = \delta(D, s_i)$ be an accept state of the automaton $\mathsf{ShortLex} = \mathsf{ShortLex}(\mathcal{D})$, resp.  $\mathsf{Geo} = \mathsf{Geo}(\mathcal{D})$. Then for any vertex $j$ that is adjacent to $i$ in the tree $\Gamma$, the state $D'' = \delta(D', s_j) \neq D'$ is also an accept state of $\mathsf{ShortLex}$, resp. $\mathsf{Geo}$.
\end{lemma}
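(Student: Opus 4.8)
My plan is to reduce both conclusions of the lemma --- that $D''$ is an accept state, and that $D'' \neq D'$ --- to the single assertion that $\alpha_j \notin D'$. The point is that, by the very definition of the transition functions, $\delta(C, s_k)$ equals the fail state $\star$ precisely when $\alpha_k \in C$ or $C = \star$. Since $D'$ is assumed to be an accept state we have $D' \neq \star$, so once I know $\alpha_j \notin D'$ it follows that $D'' = \delta(D', s_j) \neq \star$, i.e.\ $D''$ is an accept state. For the inequality $D'' \neq D'$ I will use the defining formula once more: $D'' = \{\alpha_j\} \cup (\cdots)$ contains $\alpha_j$, whereas $\alpha_j \notin D'$, so the two states are distinct.

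Thus everything comes down to proving $\alpha_j \notin D'$. First I would note that $D \neq \star$ (otherwise $D' = \delta(D, s_i)$ would already be $\star$), hence $D \subseteq \Sigma$, and all roots occurring in the defining formula for $D'$ are small roots. Then I would write out $D' = \delta(D, s_i) = \{\alpha_i\} \cup \big(\{\sigma_i(v) : v \in D\} \cup \{\sigma_i(\alpha_k) : k < i\}\big) \cap \Sigma$ --- the last family being present only for $\mathrm{ShortLex}$ and absent for $\mathrm{Geo}$ --- and argue by cases. Since $i \neq j$ we have $\alpha_j \neq \alpha_i$, so a hypothetical membership $\alpha_j \in D'$ would force either $\alpha_j = \sigma_i(v)$ for some small root $v \in D$, or $\alpha_j = \sigma_i(\alpha_k)$ for some $k < i$. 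Now $i$ and $j$ are joined by an $\infty$-labelled edge of $\Gamma$, so $m_{ij} = \infty$, hence $(\alpha_i|\alpha_j) = -\cos(\pi/m_{ij}) = -1$ and $\sigma_i(\alpha_j) = \alpha_j + 2\alpha_i$. Applying the involution $\sigma_i$ to each of the two equations shows that the preimage must equal $\sigma_i(\alpha_j) = \alpha_j + 2\alpha_i$ in either case. This is the crux: $\sigma_i(\alpha_j)$ is \emph{not} a small root when $m_{ij} = \infty$ (recalled in Section~\ref{automata:construction}), so it cannot be the element $v \in D \subseteq \Sigma$ of the first case; and it is visibly not a simple root, being a vector with two distinct nonzero coordinates, so it cannot be the $\alpha_k$ of the second case. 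Both cases being impossible, $\alpha_j \notin D'$, and the lemma follows.

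The argument is short and I do not anticipate a genuine obstacle. The only point requiring a little care is that the $\mathrm{ShortLex}$ transition injects, at each step, the extra batch of roots $\sigma_i(\alpha_k)$ with $k < i$, so that sub-case must be dispatched separately from the "$\sigma_i(v)$, $v \in D$" sub-case; but both are killed by the same identity $\sigma_i^2 = \mathrm{id}$ together with the fact that $\sigma_i(\alpha_j)$ is neither small nor simple across an $\infty$-edge. Conceptually, the lemma merely records that along an edge of $\Gamma$ the reflection $\sigma_i$ can never carry a remembered small root (nor a simple root) onto $\alpha_j$, so the automaton always has room to take one more step into a genuinely new accept state --- the phenomenon that will let us "hike" along $\Gamma$ in the subsequent proofs.
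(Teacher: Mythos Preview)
Your proof is correct and follows essentially the same route as the paper's: both reduce to showing $\alpha_j \notin D'$, and both derive a contradiction from the identity $\sigma_i(\alpha_j) = \alpha_j + 2\alpha_i$. The only minor differences are that the paper invokes the inequality $(\alpha\mid\alpha_i) < 1$ for small roots $\alpha\neq\alpha_i$ (citing \cite[Lemma~4.7.1]{BB}) rather than the fact that $\sigma_i(\alpha_j)\notin\Sigma$ when $m_{ij}=\infty$, and that you treat the extra $\mathrm{ShortLex}$ batch $\{\sigma_i(\alpha_k):k<i\}$ explicitly whereas the paper leaves this case implicit.
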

\begin{proof}
By definition, all states of $\mathsf{ShortLex}$ and $\mathsf{Geo}$, except for the fail states $\star$, are accepting. If $D = \{ \emptyset \}$ is the start state, there is no sequence of transition bringing the automaton back to it, by definition. Now we need to check that $s_j\notin D'$, which shows that $D''\ne \star$. Indeed, supposing the contrary, we would have $\sigma_i(\alpha) = \alpha_j$ or, equivalently $\alpha = \sigma_i(\alpha_j) = \alpha_j + 2 \alpha_i$, for a small root $\alpha \in D$. The latter is impossible since $(\alpha_j + 2 \alpha_i \,|\, \alpha_i)=1$, which contradicts the inequality $(\alpha|\alpha_i)<1$ that holds true for any short root $\alpha\ne \alpha_i$ (see \cite[Lemma 4.7.1]{BB}). Since $s_j\notin D'$, and $s_j\in \delta(D', s_j)=D''$, we also obtain that $D''\ne D'$.
\end{proof}

The main upshot of Lemma~\ref{lemma:hiking} is that we can repeatedly apply the generators which are connected in $\Gamma$, and thus move between the accepting states of the automaton, be it shortlex or geodesic. As in our case the tree $\Gamma$ spans the whole diagram $\mathcal{D}$, this gives a fair amount of freedom, which will be used later to prove strong connectivity of both automata.

For any given root $\alpha$ of $\Sigma$, let $\sigma_\alpha$ be the associated reflection. For a given set of simple roots $A = \{\alpha_{i_1}, \dots, \alpha_{i_k}\} \subset \mathbb{R}^n$, let $\mathrm{Fix}(i_1, \dots, i_k)$ be the set of all roots from $\Sigma$ that are fixed by $\sigma_\alpha$ for all $\alpha\in A$. 

\begin{lemma}[Fixed roots lemma]\label{lemma:stabiliser}
Let vertices $i$ and $j$ of $\mathcal{D}$ be adjacent in $\Gamma$. Then any element of $\mathrm{Fix}(i, j)$ belongs to the linear span of $\alpha_i + \alpha_j$ together with $\alpha_k$ for which $m_{ki}=2$ and $m_{kj}=2$ in the diagram $\mathcal{D}$, and vice versa. 
\end{lemma}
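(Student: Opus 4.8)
The statement says that any small root $\beta$ fixed by both $\sigma_i$ and $\sigma_j$ (with $i,j$ adjacent in $\Gamma$, hence $m_{ij}=\infty$) lies in the span of $\alpha_i+\alpha_j$ together with the $\alpha_k$ satisfying $m_{ki}=m_{kj}=2$. The natural approach is to expand $\beta = \sum_{k} c_k\alpha_k$ in the basis of simple roots, with all $c_k\ge 0$ (small roots are non-negative combinations of simple roots), and translate the two fixed-point conditions $\sigma_i(\beta)=\beta$ and $\sigma_j(\beta)=\beta$ into linear equations on the coefficients $c_k$. By the reflection formula \eqref{eq:reflection-formula}, $\sigma_i(\beta)=\beta - 2(\beta|\alpha_i)\alpha_i$, so $\sigma_i(\beta)=\beta$ is equivalent to $(\beta|\alpha_i)=0$, and likewise $(\beta|\alpha_j)=0$. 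So everything reduces to analysing the two scalar equations $\sum_k c_k(\alpha_k|\alpha_i)=0$ and $\sum_k c_k(\alpha_k|\alpha_j)=0$.

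First I would isolate the contributions of $\alpha_i$ and $\alpha_j$ themselves. Since $m_{ij}=\infty$ we have $(\alpha_i|\alpha_j)=-\cos(\pi/\infty)=-1$, while $(\alpha_i|\alpha_i)=(\alpha_j|\alpha_j)=1$. Writing $(\beta|\alpha_i)=0$ and separating out the $k=i$, $k=j$ terms gives $c_i - c_j + \sum_{k\ne i,j} c_k(\alpha_k|\alpha_i)=0$, and symmetrically $c_j - c_i + \sum_{k\ne i,j} c_k(\alpha_k|\alpha_j)=0$. Adding these two equations, the $\pm(c_i-c_j)$ terms cancel, leaving $\sum_{k\ne i,j} c_k\big((\alpha_k|\alpha_i)+(\alpha_k|\alpha_j)\big)=0$. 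Now for every $k\ne i,j$ we have $(\alpha_k|\alpha_i)=-\cos(\pi/m_{ki})\le 0$ and $(\alpha_k|\alpha_j)=-\cos(\pi/m_{kj})\le 0$ (these inner products of distinct simple roots are always $\le 0$), so every summand $c_k\big((\alpha_k|\alpha_i)+(\alpha_k|\alpha_j)\big)$ is $\le 0$ because $c_k\ge 0$. A sum of non-positive terms vanishing forces each term to vanish, so for every $k\ne i,j$ with $c_k\ne 0$ we must have $(\alpha_k|\alpha_i)=(\alpha_k|\alpha_j)=0$, i.e. $m_{ki}=m_{kj}=2$. This already shows that the only simple roots that can appear with nonzero coefficient in $\beta$ are $\alpha_i$, $\alpha_j$, and those $\alpha_k$ with $m_{ki}=m_{kj}=2$.

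It remains to pin down the relationship between $c_i$ and $c_j$. Having shown that the ``extra'' $\alpha_k$ contribute nothing to $(\beta|\alpha_i)$ or $(\beta|\alpha_j)$, the equation $(\beta|\alpha_i)=0$ collapses to $c_i-c_j=0$, i.e. $c_i=c_j$; the $j$-equation gives the same. Hence $\beta = c_i(\alpha_i+\alpha_j) + \sum_{k:\,m_{ki}=m_{kj}=2} c_k\alpha_k$, which is exactly the asserted membership in $\langle \alpha_i+\alpha_j\rangle \oplus \langle \alpha_k : m_{ki}=m_{kj}=2\rangle$.

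**Main obstacle.** The argument is essentially a short linear-algebra computation once one knows two facts: that small roots have non-negative coordinates in the simple-root basis (given in the first bullet defining $\Sigma$), and that $(\alpha_k|\alpha_\ell)\le 0$ for $k\ne\ell$ (immediate from $B(\alpha_k,\alpha_\ell)=-\cos(\pi/m_{k\ell})$ with $m_{k\ell}\ge 2$). The one place to be slightly careful is the sign argument: it crucially uses that $c_k\ge 0$ \emph{and} that both $(\alpha_k|\alpha_i),(\alpha_k|\alpha_j)\le 0$, so that the sum of the products can only vanish if each vanishes — this is where adjacency of $i$ and $j$ in $\Gamma$, forcing $m_{ij}=\infty$ and hence the clean cancellation $c_i-c_j$ versus $c_j-c_i$ upon adding, does the real work. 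No genuine difficulty is anticipated beyond bookkeeping.
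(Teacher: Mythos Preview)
Your proof is correct and follows essentially the same route as the paper's: both expand $\beta$ in simple roots, use $\sigma_i(\beta)=\beta \Leftrightarrow (\beta|\alpha_i)=0$ (and likewise for $j$), and exploit the non-negativity of the coefficients together with $(\alpha_k|\alpha_i),(\alpha_k|\alpha_j)\le 0$ for $k\ne i,j$. The only cosmetic difference is that the paper derives the two inequalities $c_i-c_j\ge 0$ and $c_i-c_j\le 0$ separately and then reads off the vanishing of the residual sums, whereas you add the two scalar equations first to kill the $\pm(c_i-c_j)$ terms and then return to deduce $c_i=c_j$; the content is identical.
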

\begin{proof}
Let $v$ be a small root such that $\sigma_i(v)=v$ and $\sigma_j(v)=v$. Since $v$ is positive, we can write it as $v = \sum^n_{s=1} c_s \alpha_s$, with all $c_s \geq 0$ for $1 \leq s \leq n$ and at least one $c_s$ being non-zero. Since 
$\sigma_i$ and $\sigma_j$ fix $v$, then formula \eqref{eq:reflection-formula} gives
\begin{equation}
0 = (v | \alpha_i) = \sum^n_{s=1} c_s (\alpha_s | \alpha_i) = c_i (\alpha_i | \alpha_i) + c_j (\alpha_j | \alpha_i) + \sum^n_{s=1, s\neq i,j} c_s (\alpha_s | \alpha_i),
\end{equation}
\begin{equation}
0 = (v | \alpha_j) = \sum^n_{s=1} c_s (\alpha_s | \alpha_j) = c_j (\alpha_j | \alpha_j) + c_i (\alpha_i | \alpha_j) + \sum^n_{s=1, s\neq i,j} c_s (\alpha_s | \alpha_j).
\end{equation}
This yields, together with the fact that $-1 \leq (\alpha_s | \alpha_i) \leq 0$ and $-1 \leq (\alpha_s | \alpha_j) \leq 0$, for $s\neq i, j$, that
\begin{equation}
c_i - c_j = -\sum^n_{s=1, s\neq i,j} c_s (\alpha_s | \alpha_i) \geq 0,
\end{equation}
and, simultaneously,
\begin{equation}
c_i - c_j = \sum^n_{s=1, s\neq i,j} c_s (\alpha_s | \alpha_i) \leq 0.
\end{equation}
These two inequalities immediately imply that $c_i = c_j$. Then, we also see that $c_s = 0$ for all $s$ such that $\mathcal{D}$ has at least one of the edges connecting $s$ to $i$ or $j$.
\end{proof}

\begin{lemma}[Cycling lemma]\label{lemma:cycling}
Let some vertices $i$ and $j$ in the diagram $\mathcal{D}$ be connected by an edge in $\Gamma$. Then for any small root $v \in \Sigma = \Sigma(\mathcal{D})$, there exists a natural number $N \geq 1$ such that $(s_i s_j)^N(v) \notin \Sigma$, unless $v \in \mathrm{Fix}(i, j)$.
\end{lemma}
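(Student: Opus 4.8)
\emph{Proof strategy.} The plan is to reduce the iteration of $t:=\sigma_i\sigma_j$ on $v$ to a two-dimensional affine dynamical system, and to exploit that, since $i$ and $j$ are joined by an edge of $\Gamma$, we have $m_{ij}=\infty$, which makes the linear part of that system a nontrivial unipotent map. First I would record the key structural fact that, by formula~\eqref{eq:reflection-formula}, each reflection $\sigma_i$ changes only the $\alpha_i$-coordinate of a vector and $\sigma_j$ only the $\alpha_j$-coordinate; hence, writing the small root as $v=a\,\alpha_i+b\,\alpha_j+u$ with $u=\sum_{s\neq i,j}c_s\alpha_s$ and all $c_s\ge 0$ (small roots are non-negative combinations of simple roots), the component $u$ is fixed by every power of $t$, and the orbit $(s_is_j)^N(v)$ is governed by an affine map $F\colon\mathbb{R}^2\to\mathbb{R}^2$, $F(x)=Mx+c$, applied to the initial vector $x_0=(a,b)^{\top}$. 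I would then write this map out explicitly: using $(\alpha_i|\alpha_i)=(\alpha_j|\alpha_j)=1$ and $(\alpha_i|\alpha_j)=-\cos(\pi/\infty)=-1$, one gets $M=\left(\begin{smallmatrix}3&-2\\2&-1\end{smallmatrix}\right)$ (for one choice of composition order) and $c=(-2p-4q,\,-2q)^{\top}$, where $p=(u|\alpha_i)\le 0$ and $q=(u|\alpha_j)\le 0$, the signs because $(\alpha_s|\alpha_i),(\alpha_s|\alpha_j)\le 0$ for all $s\ne i,j$.

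Next I would use that $M$ is a \emph{nontrivial} unipotent transformation: $(M-I)^2=0$, $M\ne I$, and $\ker(M-I)=\mathbb{R}(1,1)^{\top}$. Writing $M=I+\nu$ with $\nu^2=0$, the orbit has the closed form $F^N(x_0)=x_0+N(\nu x_0+c)+\binom{N}{2}\,\nu c$, so it is bounded if and only if $x_0$ is a fixed point of $F$; otherwise $\|F^N(x_0)\|$ grows at least linearly in $N$. Since the set $\Sigma$ of small roots is finite, an unbounded orbit leaves $\Sigma$ after finitely many steps, and this already proves the lemma for every small root $v$ not fixed by $t$. It then remains to show that a small root with $t(v)=v$ necessarily lies in $\mathrm{Stab}(\alpha_i,\alpha_j)$. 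Solving $F(a,b)=(a,b)$ yields $a-b=q$ and, simultaneously, $a-b=p+2q$, so $p+q=0$; this is the one step where positivity is indispensable, since $p,q\le 0$ now force $p=q=0$, and hence $a=b$. Then $(v|\alpha_i)=a\bigl((\alpha_i|\alpha_i)+(\alpha_j|\alpha_i)\bigr)+p=0$, and likewise $(v|\alpha_j)=0$, so $\sigma_i(v)=\sigma_j(v)=v$, i.e. $v\in\mathrm{Stab}(\alpha_i,\alpha_j)$, as required.

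The passage to planar dynamics and the unipotence of $M$ are routine bookkeeping once the triangularity observation is in place. The step I expect to need the most care is the last one: a nontrivial unipotent linear map does possess fixed vectors, and a priori there could be $t$-fixed roots with $p=-q\ne 0$ that do \emph{not} lie in $\mathrm{Stab}(\alpha_i,\alpha_j)$; it is precisely the hypothesis that $v$ is a \emph{positive} combination of the simple roots, together with $m_{ij}=\infty$, that excludes these. I would also be careful about the small point that a periodic orbit of the unipotent affine map $F$ is automatically a fixed point, so that ``$(s_is_j)^N(v)\in\Sigma$ for all $N\ge 1$'' genuinely reduces to ``$v$ is fixed by $t$''.
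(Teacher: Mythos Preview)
Your argument is correct and essentially the same as the paper's: both reduce to the planar dynamics of $R=\sigma_i\sigma_j$ on $\langle\alpha_i,\alpha_j\rangle$, observe that the matrix $\left(\begin{smallmatrix}3&-2\\2&-1\end{smallmatrix}\right)$ is a nontrivial unipotent (the paper passes to its Jordan form and sums the powers, you write $M=I+\nu$ with $\nu^2=0$ and expand $F^N$), and conclude that the orbit diverges unless the displacement $w:=R(v)-v$ vanishes, which is exactly your fixed-point condition $F(x_0)=x_0$. One small remark: the positivity you call ``indispensable'' is in fact not needed, since from $a-b=q$ and $p+q=0$ one already gets $(v|\alpha_j)=-(a-b)+q=0$ and $(v|\alpha_i)=(a-b)+p=q+p=0$ directly; the paper reaches this by solving $c_i=c_j=0$ for the inner products $(v|\alpha_i),(v|\alpha_j)$ rather than for the coordinates $a,b,p,q$.
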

\begin{proof}
We shall prove that for any such $i$, $j$ and any positive root $v \notin \mathrm{Fix}(i, j)$, we have that
\begin{equation}
\lim_{k\to \infty} \| (s_i s_j)^k(v) \| \rightarrow \infty,
\end{equation}
in the $\ell_2$--norm. As $|\Sigma|<\infty$, this would imply the lemma.

Let $v_0 = v$, and let $R = s_i s_j$. By a straightforward computation,
\begin{equation}
R^k(v_0) = v_0 + (I + R + R^2 + \dots + R^{k-1})\, w,
\end{equation}
where
\begin{equation}
w = (-4(v|\alpha_i)-2(v|\alpha_j)) \alpha_i + (-2(v|\alpha_i)) \alpha_j = c_i \alpha_i + c_j \alpha_j.
\end{equation}
Then, by using the fact that $i$ and $j$ are connected by an edge in $\Gamma$, we compute
\begin{equation}
R(w) = R(c_i \alpha_i + c_j \alpha_j) = (3c_i - 2c_j) \alpha_i + (2c_i - c_j) \alpha_j.
\end{equation}
This means that in the subspace $W$ spanned by $\alpha_i$ and $\alpha_j$, the matrix of $R$ can be written as
\begin{equation}
R|_W = \left( \begin{array}{cc}
3& -2\\
2& -1
\end{array} \right),
\end{equation}
by using $\{\alpha_i, \alpha_j\}$ as a basis.
One can see that $R_W = T J_R T^{-1}$, where
\begin{equation}
J_R = \left( \begin{array}{cc}
1& 1\\
0& 1
\end{array} \right)
\end{equation}
is the Jordan normal form of $R|_W$, which has the following sum of powers:
\begin{equation}
R_k=\sum^{k-1}_{i=0} J_R^i = \left( \begin{array}{cc}
k& \frac{(k-1)k}2\\
0& k
\end{array} \right).
\end{equation}
As for any non-zero vector $u$ one has $\lim_{k\to \infty} \| R_k u \| = \infty$, we also get that
\begin{equation}
\|R^k(v_0) - v_0\| = \|\Big(  \sum^{k-1}_{i=0} R^i \Big) w\| = \| R_k(w) \| \rightarrow \infty,
\end{equation}
unless $w = 0$. In this case, by solving $c_i = c_j = 0$ about the inner products $(v|\alpha_i)$ and $(v|\alpha_j)$, we find that both inner products are equal to $0$, hence $v$ is stable under both reflections $\sigma_i$ and $\sigma_j$, which implies $v \in \mathrm{Fix}(i, j)$.
\end{proof}

The meaning of the Lemma above is that by repeated applications of  $s_i$ and $s_j$, which we informally call ``pedalling'', we can ``cycle away'' in the $\ell_2$--norm from any root $v$ and thus, in particular, we can escape any subset of small roots by applying Cycling lemma to its elements. We shall put this fact to essential use in one more lemma below.

In the following considerations we keep track of the coordinates in the canonical basis, so we introduce a notation $v[i]$ for the $i$--th coordinate $c_i$ of the vector $v$ written out as a sum $v = \sum^n_{s=1} c_s \alpha_s$ in the canonical basis of simple roots.

Then, for a finite set of positive roots $A \subset \mathbb{R}^n$, let us define its \textit{height} as
\begin{equation}
H(A) = \max_{v\in A}\, \{ i \, |\, v[i] \neq 0, \,\, v[j] = 0,\, \forall j > i \},
\end{equation}
and its \textit{width} as
\begin{equation}
W(A) = \mathrm{card}\, \{ v \in A\, |\, v[{h(A)}] \neq 0 \}.
\end{equation}

\begin{lemma}[Hydra's lemma]\label{lemma:hydra}
Let $D \neq \{ \emptyset \}, \star$ be a state of the automaton $\mathsf{ShortLex}$ or $\mathsf{Geo}$ for an $\infty$--spanned group $G$. Then there exists a word $w$ in the respective language such that $\widehat{\delta}(D, w) = \{ \alpha_1 \}$.
\end{lemma}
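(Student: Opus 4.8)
The goal is to show that from any non-trivial, non-fail state $D$ we can drive the automaton to the singleton state $\{\alpha_1\}$ by reading an admissible word. The natural strategy is to reduce the height $H(D)$ step by step until we reach a state supported only on vertices near the root of the spanning tree $\Gamma$, and then finish by an explicit short sequence of moves. I would argue by induction on the height $H(D)$.

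For the inductive step, suppose $H(D) = h \geq 2$. By the Labelling lemma, vertex $h$ is a leaf of the current layer of the stripping process, so it has a unique neighbour $j < h$ on the (unique in $\Gamma$) path towards vertex $1$. The key observation is that every small root $v \in D$ with $v[h] \neq 0$ is \emph{not} stabilised by both $\sigma_h$ and $\sigma_j$ — indeed, by the Stabiliser lemma such a stabilised root would be a combination of $\alpha_h + \alpha_j$ and of roots $\alpha_k$ with $m_{kh}=m_{kj}=2$; but since $m_{hj} = \infty$, the vector $\alpha_h + \alpha_j$ is itself not a small root, and any simple root $\alpha_k$ commuting with both has $\alpha_k[h]=0$, a contradiction. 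Therefore, applying the Cycling lemma to each of the finitely many small roots currently sitting in $D$ with nonzero $h$-coordinate, there is some power $N$ of the word $(s_h s_j)$ such that reading $(s_h s_j)^N$ (which is always admissible by the Hiking lemma, since $h$ and $j$ are adjacent in $\Gamma$, so we never hit the fail state) kills all of them: every such root is pushed out of $\Sigma$. One must also check that the new roots created along the way — the $\alpha_h$, $\alpha_j$ and their $\sigma_h,\sigma_j$-images that get added — do not have nonzero $h$-coordinate beyond what is controlled, i.e. that after this pedalling the state $D' = \widehat{\delta}(D, (s_h s_j)^N)$ has $H(D') < h$. Here I would use that the only way to get a root with large $h$-coordinate is to reflect an existing one in $\sigma_h$, and the roots freshly inserted ($\alpha_j$, $\sigma_j(\alpha_h) = \alpha_h + 2\alpha_j$ is not small since $m_{hj}=\infty$, etc.) are supported below height $h$; a careful bookkeeping of the last pedalling steps finishes this.

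Once the height has been driven down to the base of the tree, I would handle the small remaining cases directly. The Labelling lemma guarantees the edges $1\to 2$ and $2\to 3$ are in $\Gamma$, and $m_{12} = m_{23} = \infty$, so near the root we essentially sit inside a copy of the infinite dihedral (or triangle-free right-angled) subdiagram, where the shortlex/geodesic automaton dynamics are explicit: from any state supported on $\{\alpha_1,\alpha_2\}$ (and possibly $\alpha_3$) a short word resets the automaton to $\{\alpha_1\}$. Concretely, if $\alpha_1 \notin D'$ we read $s_1$ to land in a state containing $\alpha_1$ and with height $\leq 2$; if it still contains $\alpha_2$ we pedal with $s_2 s_1$ a bounded number of times, invoking the Cycling lemma once more (no root but those in $\mathrm{Stab}(\alpha_1,\alpha_2)$ survives, and by the Stabiliser lemma plus $m_{12}=\infty$ that stabiliser is trivial in the relevant coordinates), arriving at $\{\alpha_1\}$.

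The main obstacle I anticipate is the bookkeeping in the inductive step: showing that pedalling with $(s_h s_j)$ not only ejects the old height-$h$ roots but does not reintroduce new ones, so that $H$ genuinely decreases. This requires tracking which small roots enter a state under the shortlex transition rule (which, unlike $\mathrm{Geo}$, also throws in $\sigma_i(\alpha_k)$ for $k<i$) and verifying none of those has a nonzero coordinate at height $h$ once the pedalling is complete — equivalently, that the ``debris'' created by the last application of $\sigma_h$ is cleaned up by the subsequent $\sigma_j$, or is simply not small. The Labelling lemma is exactly what makes this local: vertex $h$ being a leaf means it has a single neighbour in $\Gamma$, so there is essentially one direction in which to pedal and one coordinate to control.
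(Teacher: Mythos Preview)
Your strategy differs from the paper's in an interesting way. The paper pedals exclusively at the pair $(1,2)$ at the \emph{root} of the spanning tree and runs a double induction on $(H(D),W(D))$: it first hikes a height-realising root $\mu$ around in $\Gamma$ (via an elaborate two-case analysis) until its image leaves $\mathrm{Stab}(\alpha_1,\alpha_2)$, then applies the Cycling lemma at $(1,2)$ to drop the width by one, and only after the width hits zero does the height fall. You instead pedal at the \emph{top}, namely at $(h,j)$ where $h=H(D)$ and $j$ is the parent of $h$ in $\Gamma$, aiming to drop the height in one shot. This is a cleaner idea and avoids the paper's case split, because the stabiliser $\mathrm{Stab}(\alpha_h,\alpha_j)$ is much more constrained than $\mathrm{Stab}(\alpha_1,\alpha_2)$.

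There is, however, a genuine gap in your justification of the key claim that no small root $v$ with $v[h]\neq 0$ lies in $\mathrm{Stab}(\alpha_h,\alpha_j)$. You argue that such $v$ is a linear combination of $\alpha_h+\alpha_j$ and simple roots $\alpha_k$ with $m_{kh}=m_{kj}=2$, and then say ``$\alpha_h+\alpha_j$ is itself not a small root'' and ``$\alpha_k[h]=0$'', calling this a contradiction. But a linear combination of a non-root vector and simple roots can perfectly well be a (small) root; nothing you wrote rules out $v=c(\alpha_h+\alpha_j)+\sum c_k\alpha_k$ with $c>0$ being small. A correct argument runs as follows: the support of any root is connected in the Coxeter diagram, but here the $\alpha_k$'s with $m_{kh}=m_{kj}=2$ are disconnected from $\{h,j\}$; hence either all $c_k=0$, forcing $v=c(\alpha_h+\alpha_j)$ with $(v|v)=0$, or $c=0$, forcing $v[h]=0$. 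Also note that your repeated remark that ``vertex $h$ is a leaf'' is not warranted --- $h=H(D)$ can be any vertex; what you actually use (and what the Labelling lemma does give) is only that $h$ has a unique $\Gamma$-neighbour $j<h$.

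The bookkeeping obstacle you flag is real but tractable. For $\mathrm{Geo}$ the only roots inserted during $(s_hs_j)^N$ are $\alpha_h,\alpha_j$, each killed at the next step since $\sigma_j(\alpha_h)=\alpha_h+2\alpha_j\notin\Sigma$ and symmetrically. For $\mathrm{ShortLex}$ you must also dispose of the extra $\sigma_h(\alpha_l)$ (for $l<h$) inserted at each $s_h$-step: for $m_{lh}\ge 3$ one computes $(\sigma_h(\alpha_l)\mid\alpha_j)=-\cos(\pi/m_{lj})-2\cos(\pi/m_{lh})\le -1$, so the subsequent $\sigma_j$ ejects it; the cases $m_{lh}\in\{2,\infty\}$ are immediate. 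The analogous estimate handles $\sigma_j(\alpha_l)$ inserted at $s_j$-steps. With these checks in place, your single induction on $H(D)$ goes through and is arguably simpler than the paper's argument.
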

\begin{proof}

First we provide an argument in the case of the $\mathsf{ShortLex}$ automaton. Since by definition in each state $D\neq \{ \emptyset \}, \star$ there is a simple root, we choose some $\alpha_i \in D$. Also let $h = H(D)$ be the height of $D$ with $\mu \in D$ being some small root realising the height of $D$, i.e. $\mu[h] \neq 0$, while $\mu[k] = 0$, for all $h < k \leq n$. We also denote $F = \mathrm{Fix}(1, 2)$.

First, consider the case $h>2$. Our goal is to form a suitable word $w$ such that $w(\mu)\notin F$. Either $\mu \notin F$ right away, or one of the following cases holds.

\paragraph{I.}  \textit{There exists $k\in \{3,\ldots, h-1\}$ such that $\mu[k] \neq 0$.} Choose the minimal $k$ with this property, and let $(i_0,i_1,\ldots,i_p)$ be the path in the tree $\Gamma$ from the vertex $i_0=i$ towards the vertex $i_p=k$. Considering the words $w_l = s_{i_l} s_{i_{l-1}} \dots s_{i_1} s_{i_0}$, with $l\in \{1,\ldots,p-1 \}$, we may obtain that for some $l$ the vector $\mu' = \rho(w_l)(\mu)\notin F$. In this case, we move to the state $D' = \widehat{\delta}(D, w_l)$, which contains $\mu' = \rho(w_l)(\mu) \notin F$ and has $H(D') = H(D) = h$. Otherwise, we consider the word $w_p$, for which one has $\mu' = \rho(w_p)(\mu) \notin \mathrm{Fix}({i_{p-1}}, {i_p})$, hence we can apply Cycling lemma to $\mu'$. Thus, for some sufficiently large $N$ we have $\mu'' = (\sigma_{i_{p-1}} \sigma_{i_p})^N(\mu') \notin F$, and we move to the state $D' = \widehat{\delta}(D, w)$, with $w = (s_{i_{p-1}} s_{i_{p}})^N s_{i_{p-1}} \dots s_{i_2} s_{i_1}$ containing $\mu'' = \rho(w)(\mu) \notin F$, while $H(D') = H(D) = h$, since $w$ comprises only the reflections $s_l$ with $l<h$.

\paragraph{II.} \textit{For all $k\in \{3,\ldots,h-1 \}$ we have $\mu[k] = 0$.} Let $(i_0,i_1,\ldots,i_p)$ be the path in the tree $\Gamma$ from the vertex $i_0=i$ towards the vertex $i_p=h$. Again, moving up the tree $\Gamma$ by reading the word $w_l = s_{i_l} s_{i_{l-1}} \dots s_{i_1} s_{i_0}$, with $2 < l < p-1$, we either obtain that the vector $\mu' = \rho(w_l)(\mu)$ has a non-zero coordinate $k$ for some $2 < k < h$ and thus the state $D' = \widehat{\delta}(D, w_l)$ containing $\mu' = \rho(w_l)(\mu)$, satisfies Case I. Otherwise, we reach $l=p-2$, while in $\mu' = \rho(w_{p-2})(\mu)$ we have $\mu'[1] = \mu'[2] = c_1$ and $\mu'[h] = c_2 \neq 0$, with $\mu'[l] = 0$ for all other $2 < l < h$ and $h < l \leq n$.

If $\mu' \notin \mathrm{Fix}({i_{p-2}}, {i_{p-1}})$, we apply Cycling lemma as in Case I to remove the image of $\mu'$ from the state and thus decrease the width, and not increase the height.

If $\mu' \in \mathrm{Fix}({i_{p-2}}, {i_{p-1}})$, we either have $c_1=0$ or $(\alpha_{i_{p-1}}|\alpha_1) = 0$ and $(\alpha_{i_{p-1}}|\alpha_2) = 0$. In both cases, remembering that $(\alpha_{i_{p-1}}|\alpha_{i_{p}}) = -1$, we obtain that
\begin{equation}
\begin{aligned}
\mu'' &= \sigma_{i_{p-1}}(\mu') = \mu' - 2 (\alpha_{i_{p-1}}|\mu') \alpha_{i_{p-1}} = \\
 &= \mu' - 2 (c_1 (\alpha_{i_{p-1}}|\alpha_1) + c_1(\alpha_{i_{p-1}}|\alpha_2) + c_2(\alpha_{i_{p-1}}|\alpha_{i_p})) \alpha_{i_{p-1}} \\
 &= \mu' + 2 c_2 \alpha_{i_{p-1}},
\end{aligned}
\label{coord-computation}
\end{equation}
where $c_2\ne 0$. Then, $\mu''[i_{p-1}]=2c_2\ne 0 = \mu''[i_{p-2}]$, hence $\mu'' \notin \mathrm{Fix}({i_{p-2}}, {i_{p-1}})$. Then, again we can use the argument from Case I and apply Cycling lemma to $\mu''$. Indeed, taking $\mu''' = (\sigma_{i_{p-2}}\sigma_{i_{p-1}})^N(\mu'') \notin F$ for sufficiently big $N$ we obtain that $\mu'''\notin \Sigma$, so with a word $w = (s_{i_{p-2}}s_{i_{p-1}})^N s_{i_{p-2}} \dots s_{i_1} s_{i_0}$we move to the state $D' = \widehat{\delta}(D, w)$, which lacks $\mu''' = \rho(w)(\mu)$, while $H(D'') \le H(D) = h$.

By applying the above argument repeatedly, we arrive at an accept state $D^* = \widehat{\delta}(D, w)$ with a word $w$, possibly empty, in $\mathsf{ShortLex}$ or $\mathsf{Geo}$, so that $\lambda = \rho(w)(\mu)$ is contained in $D^*$, but not in $F$, for the above chosen $\mu \in D\cap F$. Also, we have $H(D^*) = H(D) = h$ and $W(D^*)\le W(D)$. This follows from the fact that all the roots $\lambda \in D^*$ realising the height of $D^*$ are images of height-realising roots $\mu \in D$. Indeed, no simple root $\alpha_k$ with $k\geq h$ has been added during the transition from $D$ to $D^*$, neither an image of such a root under a simple reflection $s_l$, with $l \geq h$. The word $w$ has only simple reflections $s_k$ with $k < h$, and thus we do not change any $k$--coordinates with $k\geq h$ for roots in $D$ and its subsequent images by applying any of the reflections in $w$.

Now, pick a height-realising root in $\lambda \in D^*$ and, since $\lambda \notin F$, apply Cycling lemma to $\lambda$ in order to arrive at a state $D_* = \widehat{\delta}(D^*, (s_1 s_2)^N)$, such that $H(D_*) \leq H(D) = h$, while $W(D_*) \leq W(D^*) - 1$. By applying this argument repeatedly, we can reduce the width of the subsequent states, and thus finally arrive at a state $\overline{D}$, such that $H(\overline{D}) \leq h - 1$. However, we have no control over the magnitude of $W(\overline{D})$, since many vectors of smaller height could have been added during all the above transitions\footnote{Thus, while chopping off hydra's bigger heads, we allow it to grow many more smaller ones, and nevertheless succeed in reducing it down to a single head remaining.}.

\medskip

We can apply the above argument, and finally bring the height of the state down to $h = 2$, hence all the roots in $D_*$ can be written as $c_1\alpha_1+c_2\alpha_2$. Due to Fixed root lemma, all roots in $D_*$ which are in $F = \mathrm{Fix}(1, 2)$ have $c_1 = c_2$. Since $\alpha_1+\alpha_2 = \sigma_1(\alpha_2) = \sigma_2(\alpha_1)$ is a small root, due to the dominance relation (c.f. the definitions \cite[p. 116]{BB} and \cite[Theorem~4.7.6]{BB}), this is the only option for the elements of $D_*\cap F$. Then, using Cycling lemma with powers of $(s_2 s_1)^N$ for sufficiently big $N\geq 1$ we can either reach $D_0 = \{\alpha_1\}$ or arrive to one of the states $D_1 = \{ \alpha_1, \alpha_1 + \alpha_2 \}$ or $D_2 = \{ \alpha_2, \alpha_1 + \alpha_2 \}$. Then, the states $D_1$ and $D_2$ form a two-cycle under the action of any word $w = (s_1 s_2)^N$, $N \geq 1$. Since $n \geq 3$, we use $s_3$ in order to transition instead from $D_1$ to $D_3 = \{\alpha_3, \beta_1 = \sigma_3(\alpha_1), \beta_2 = \sigma_3(\alpha_1 + \alpha_2) \}$. By Labelling lemma, vertices $2$ and $3$ are connected by an edge in $\Gamma$, and thus we can compute
\begin{equation}
\beta_1 = \alpha_1 - 2 (\alpha_1 | \alpha_3) \alpha_3 \notin F,
\end{equation}
by Fixed root lemma, since $\beta_1[1] = 1 \neq 0 = \beta_1[2]$, and
\begin{equation}
\beta_2 = \alpha_1 + \alpha_2 + (2 - 2(\alpha_1 | \alpha_3)) \alpha_3 \notin F,
\end{equation}
once again by Fixed roots lemma, since $\beta_2[3] \neq 0$  (recall that the inner product $(\alpha_1 | \alpha_3)$ is always non-positive), and the element $s_2 s_3$ has infinite order.

Now we can apply Cycling lemma to $D_3$ in order to move $\beta_1$ and $\beta_2$ away from the set $\Sigma$ of small roots, and finally arrive at the state $D_0 = \widehat{\delta}(D_3, (s_1 s_2)^N) = \{ \alpha_1, (\sigma_1 \sigma_2)^N(\beta_1), (\sigma_1 \sigma_2)^N(\beta_2) \} \cap \Sigma = \{ \alpha_1 \}$.

A similar argument applies to the case of $\mathsf{Geo}$ automaton, and it can be done by a simpler induction on $|D|$, the cardinality of $D$. Indeed, applying Hiking lemma never increases $|D|$, and applying Cycling lemma to the height-realising root reduces $|D|$.
\end{proof}

\begin{lemma}[GCD lemma]\label{lemma:GCD}
The greatest common divisor of the lengths of all cycles in the $\mathsf{ShortLex}$, resp. $\mathsf{Geo}$, automaton for an $\infty$--spanned Coxeter group equals $1$.
\end{lemma}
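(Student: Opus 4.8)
The plan is to produce two cycles in the automaton whose lengths are coprime. Since the greatest common divisor of the lengths of all cycles divides the length of every individual cycle, exhibiting one cycle of length $2$ and one of length $3$ forces this gcd to equal $1$; in particular no appeal to strong connectivity, nor to the Hiking or Hydra lemmas, is needed. Both cycles will be based at the state $\{\alpha_1\}=\delta(\{\emptyset\},s_1)$ and will use only $s_1,s_2,s_3$. Recall that, by the Labelling lemma, the spanning tree $\Gamma$ contains the edges $1\to 2$ and $2\to 3$, so that $m_{12}=m_{23}=\infty$, while $m_{13}$ is left arbitrary.

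\textbf{Elementary transitions.} First I would record a few transitions valid verbatim in both $\mathrm{ShortLex}$ and $\mathrm{Geo}$. From $B(\alpha_1,\alpha_2)=B(\alpha_2,\alpha_3)=-1$ and the small-root bound $(\alpha|\alpha_i)<1$ for every small root $\alpha\neq\alpha_i$ (see \cite[Lemma~4.7.1]{BB}), one checks that $\sigma_2(\alpha_1)=\alpha_1+2\alpha_2$, $\sigma_1(\alpha_2)=\alpha_2+2\alpha_1$ and $\sigma_3(\alpha_2)=\alpha_2+2\alpha_3$ are not small (each has inner product $1$ with the relevant $\alpha_i$). Hence $\delta(\{\alpha_1\},s_2)=\{\alpha_2\}$ and $\delta(\{\alpha_2\},s_1)=\{\alpha_1\}$; moreover the state $D_3:=\delta(\{\alpha_1\},s_3)$ is not the fail state (as $\alpha_3\neq\alpha_1$), it satisfies $D_3\subseteq\{\alpha_3,\sigma_3(\alpha_1)\}$, and $\alpha_2\notin D_3$, since neither $\alpha_3$ nor $\sigma_3(\alpha_1)=\alpha_1+2\cos(\pi/m_{13})\,\alpha_3$ involves $\alpha_2$.

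\textbf{The one computation requiring care.} The nontrivial point is $\delta(D_3,s_2)=\{\alpha_2\}$. For this I would verify that $\sigma_2(v)$ is not a small root for each $v\in\{\alpha_1,\alpha_3,\sigma_3(\alpha_1)\}$: writing $t=2\cos(\pi/m_{13})\geq 0$, one finds $(\sigma_2(\alpha_1)|\alpha_2)=(\sigma_2(\alpha_3)|\alpha_2)=1$ and $(\sigma_2(\sigma_3(\alpha_1))|\alpha_2)=1+t\geq 1$, so \cite[Lemma~4.7.1]{BB} excludes all three from $\Sigma$. The mild subtlety is that this must hold uniformly in $m_{13}$: when $m_{13}=2$ one has $\sigma_3(\alpha_1)=\alpha_1$ and $D_3=\{\alpha_1,\alpha_3\}$, when $m_{13}=\infty$ one has $D_3=\{\alpha_3\}$, and when $m_{13}$ is finite $\geq 3$ the vector $\sigma_3(\alpha_1)$ is itself a small root lying in $D_3$; in every case $D_3$ is contained in the two-element set above, and the bound $(\,\cdot\,|\alpha_2)<1$ disposes of the images under $\sigma_2$.

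\textbf{Conclusion.} Putting this together, the word $s_3s_2s_1$ traces the closed walk $\{\alpha_1\}\xrightarrow{s_3}D_3\xrightarrow{s_2}\{\alpha_2\}\xrightarrow{s_1}\{\alpha_1\}$ — through three pairwise distinct states, since $D_3$ contains $\alpha_3$ whereas $\{\alpha_1\}$ and $\{\alpha_2\}$ do not — without ever reaching the fail state, so it is a simple cycle of length $3$; likewise $s_2s_1$ gives a simple $2$-cycle $\{\alpha_1\}\xrightarrow{s_2}\{\alpha_2\}\xrightarrow{s_1}\{\alpha_1\}$. Therefore the gcd of the lengths of all cycles divides $\gcd(2,3)=1$, as claimed. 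I expect essentially all of the (light) work to be the small-root verifications above, with the only real obstacle being to make them uniform across the possible labels $m_{13}$.
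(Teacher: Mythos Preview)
Your proof is correct. The overall strategy matches the paper's: exhibit the $2$-cycle $\{\alpha_1\}\xrightarrow{s_2}\{\alpha_2\}\xrightarrow{s_1}\{\alpha_1\}$ together with an odd-length cycle based at $\{\alpha_1\}$ whose first step is $s_3$. The execution of the odd cycle, however, differs. The paper applies $s_3$ and then invokes the Stabiliser and Cycling lemmas to argue that $(s_1s_2)^N$ eventually returns to $\{\alpha_1\}$, yielding a cycle of some odd length $2N+1$. You instead pick the word $s_3s_2s_1$ and verify by hand, via the bound $(\alpha\mid\alpha_i)<1$ for small roots, that each step lands in an accept state and that $\delta(D_3,s_2)=\{\alpha_2\}$; this gives an explicit $3$-cycle without any appeal to the earlier lemmas. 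Your route is thus more self-contained and uniform in $m_{13}$: in particular, it handles $m_{13}=2$ cleanly (where $D_3=\{\alpha_1,\alpha_3\}$ and the paper's choice of first reading $s_1$ would hit the fail state), at the cost of the small extra case analysis you flag. Both arguments are short; yours trades dependence on Lemmas~\ref{lemma:stabiliser}--\ref{lemma:cycling} for a couple of explicit inner-product checks.
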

\begin{proof}
First of all, let us notice that there is a cycle of length $2$ in each:
\begin{equation}
\{ \alpha_1 \} \rightarrow \delta(\{ \alpha_1 \}, s_2) = \{ \alpha_2 \} \rightarrow \delta(\{ \alpha_2 \}, s_1) = \{ \alpha_1 \}.
\end{equation}

Then, let us consider the following sequence of transitions in $\mathsf{ShortLex}$. Let $D_0 = \{ \alpha_1 \}$, and let $m_{13} \neq \infty$. Then $D_1 = \delta(D_0, s_3) = \{ \alpha_3, \mu = \alpha_1 + c \alpha_3  \}$, where $c = - 2 \cos \frac{\pi}{m_{13}} \leq 0$. Here, $\mu \notin \mathrm{Fix}({1, 2})$ by Fixed roots Lemma. Thus, there exists a natural number $N \geq 1$ such that $(\sigma_1 \sigma_2)^N(\mu) \notin \Sigma$, and $D_{2N + 1} = \widehat{\delta}(D_1, (s_1s_2)^N) = \{ \alpha_1 \} = D_0$. This means that we obtain a cycle of odd length.  If $m_{13} = \infty$, then $\mu \notin \Sigma$, and we readily obtain a cycle of length $3$ by putting $N = 1$.

A similar argument applies to the case of $\mathsf{Geo}$ automaton.
\end{proof}

\section{Proofs of main theorems}\label{proofs}

In this section we use the auxiliary lemmas obtained above in order to prove the main theorems of the paper. Namely, we show that the following statement hold for a Coxeter group $G$ that is $\infty$--spanned:
\begin{itemize}
\item the word growth rate $\omega(G) \geq (1+\sqrt{5})/2$ and the geodesic growth rate 
$\gamma(G) \geq \gamma_0$ (where $\gamma_0$ is the real root of $x^3 - 2x - 2$) are Perron numbers (Theorem~\ref{thm:Perron});
\item unless $G$ is a free product of more than $2$ copies of $C_2$, we have $\gamma(G) > \omega(G)$ (Theorem~\ref{thm:geodesic}).
\end{itemize}

\paragraph{Proof of Theorem~\ref{thm:Perron}.} Below, we show that the word growth rate $\omega(G)$ of an $\infty$--spanned Coxeter group $G$ (with respect to its standard generating set) is a Perron number. A fairly analogous argument shows that the geodesic growth rate $\gamma(G)$ of $G$ is also a Perron number.

First, we show that any state $D = \widehat{\delta}(\{ \emptyset \}, w)$, for a shortlex word $w$, can be reached from the state $\{ \alpha_1 \}$. Observe, that $\delta(\{ \alpha_1 \}, s_k) = \{ \alpha_k \} \cup \{ s_k(\alpha_1), s_k(\alpha_l), l < k \} \cap \Sigma = \{ \alpha_k, s_k(\alpha_l), l < k \} \cap \Sigma = \delta(\{\emptyset \}, s_k)$, for any $k > 1$. Thus, $\widehat{\delta}(\{\emptyset\}, w) = \widehat{\delta}(\{\alpha_1\}, w)$, if $w$ does not start with $s_1$, and $\widehat{\delta}(\{\emptyset\}, w) = \widehat{\delta}(\{\alpha_1\}, w')$, if $w = s_1 w'$.

Then, Hydra's lemma guarantees that we can descend in $\mathsf{ShortLex}$ from any state $D \neq \star$ to $\{ \alpha_1 \}$. Together with the above fact, we have that $\mathsf{Geo}\setminus \{ \emptyset \}$ is strongly connected, and then the transfer matrix $M = M(\mathsf{Geo}\setminus \{ \emptyset \})$ is irreducible.

By GCD lemma, $M$ is also aperiodic, and thus primitive. Then the spectral radius of $M$ is a Perron number \cite[Theorem 4.5.11]{LM}.  Since the latter equals the growth rate of the shortlex language for $G$  by \cite[Proposition 4.2.1]{LM}, we obtain that $\omega(G)$ is a Perron number.

In order to prove the lower bounds for $\omega(G)$ and $\gamma(G)$ we first note that for any $\infty$--spanned Coxeter group $G$ there exist three generators $s_i,s_j,s_k\in S$ such that $m_{ij}=m_{ik}=\infty$ and $m_{kj}=n$, which can be either some finite label $n\geq 2$ or a label $n=\infty$. Then the parabolic subgroup $\langle s_i,s_j,s_k \rangle$ is the free product $C_2 * D_n$, where $D_n$ is either the dihedral group of order $2n$, if $n$ is finite, or $D_\infty$, if $n = \infty$.

For the free product $A*B$ of two groups $A$ and $B$ generated by sets $U$ and $V$, respectively, we have the following formula that relates the growth series $\omega_{(A*B,U\cup V)}(z)$ to the series $\omega_{(A,U)}(z)$ and $\omega_{(B,V)}(z)$, cf. \cite[p.~156]{H}:
\begin{equation}\label{free_product-formula}
  \frac{1}{\omega_{(A*B, U\cup V)}(z)}=\frac{1}{\omega_{(A, U)}(z)}+\frac{1}{\omega_{(B, V)}(z)}-1.
\end{equation}
Since for $U$ and $V$ being the standard generating sets of $C_2$ and $D_2$, respectively, we have $\omega_{(C_2, U)}(z) = 1 + z$ and $\omega_{(D_2, V)}(z) = 1 + 2z + z$, formula \eqref{free_product-formula} implies $\omega_{(C_2 * D_2, U\cup V)}(z)=(x+1)^2/(1-x-x^2)$. The smallest real pole of this function is $1/\varphi$, where $\varphi=(1+\sqrt5)/2$, hence $\varphi$ is the growth rate of
$C_2 * D_2$ with respect to the standard generating set consisting of three involutions. Note that the shortlex language for $C_2 * D_2$ is a sublanguage of the shortlex language for $C_2 * D_n$, hence for the word growth rates we have $\omega(C_2 * D_2)\leq \omega(C_2 * D_n)$ for any $n\geq 2$. Since $\langle s_i,s_j,s_k \rangle = C_2 * D_n$ is a parabolic subgroup of $G$, we obtain the desired inequality of $\omega(G)\geq \varphi$.

A formula completely similar to \eqref{free_product-formula} was proved in \cite[p.~753]{LMW} for the geodesic growth series of a free product: one just needs to substitute all instances of $\omega$ by $\gamma$. Plugging $\gamma_{(C_2, U)}(z)=1+z$ and $\gamma_{(D_2, V)}(z)=1+2z+2z^2$ into this formula, we obtain that $\gamma_{(C_2 * D_2, U\cup V)}(z)=(x+1)(1+2x+2x^2)/(1-2x^2-2x^3)$. One can easily check that $1/\gamma_0$ has the smallest absolute value among the poles of $\gamma_{C_2 * D_2}(z)$, hence $\gamma(C_2 * D_2) = \gamma_0$. The natural inclusion of the geodesic languages shows that $\gamma(G)\geq \gamma_0$, analogous to the argument above. \qed

\begin{remark}
The conditions under which Theorem~\ref{thm:Perron} was proved may seem somewhat tight, as for a Coxeter group of rank $n$ at least $n-1$ edges have to be labelled $\infty$. One could try to loosen this restriction and substitute some of these $\infty$ labels by a large natural number and expect an analogous statement to hold. However, it seems hardly possible to adapt our approach in this case, as the auxiliary lemmas would then need to be stated in a quantitative way, while we have no control whatsoever neither on the cardinality of the set $\Sigma$, nor on the norms of the small roots in it. This seems indeed a major obstacle, cf. \cite{Casselman2} for more details.
\end{remark}

\paragraph{Proof of Theorem~\ref{thm:geodesic}.} Next, we aim at proving that $\gamma(G) > \omega(G)$, unless $G$ is a free product of several copies of $C_2$, in which case $\gamma(G) = \omega(G)$. For convenience, let $A$ denote the automaton $\mathsf{ShortLex}$ and $B$ denote the automaton $\mathsf{Geo}$ for $G$. Let $L(F)$ be the language accepted by a given finite automaton $F$, and let $\lambda(F)$ be the exponential growth rate of $L$.

We shall construct a new automaton $A'$, by modifying $A$, such that
\[L(A) \subsetneq L(A') \subseteq L(B)\]
and, moreover, $\omega(G) = \lambda(A) < \lambda(A') \leq \lambda(B) = \gamma(G)$.

\begin{figure}[ht]
\centering
\includegraphics[width=0.5\textwidth]{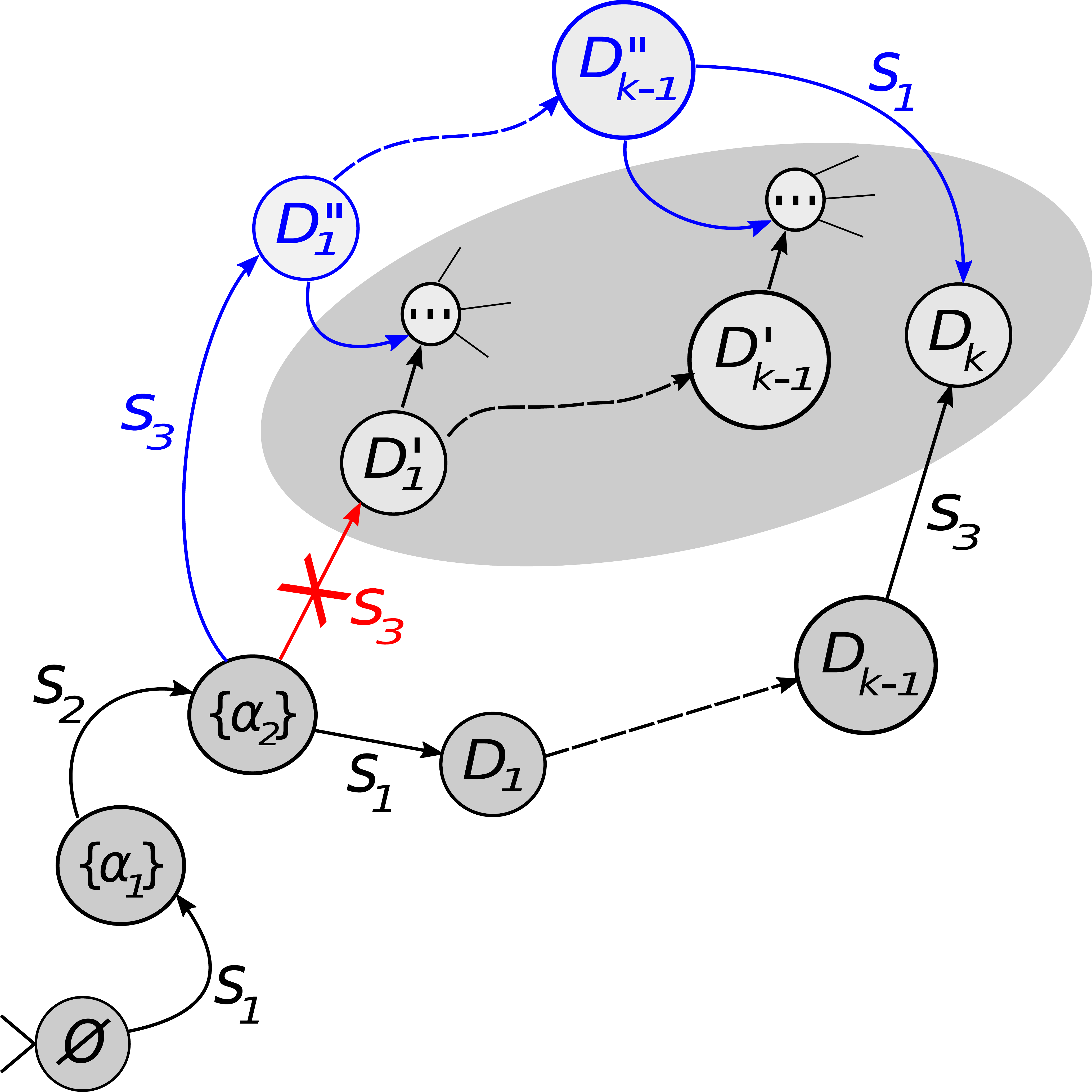}
\caption{The modified automaton $A'$: transition $\{\alpha_2\} \rightarrow D'_1$ is removed and a path $p$ comprising new states $D''_i$ is added.}\label{fig:automaton}
\end{figure}

Since $G$ is not a free product, we  may assume that the edge $1 \rightarrow 3$ has label $m \geq 2$, and $m \neq \infty$. Consider two cases depending on the parity of $m$. If $m$ is even, then let $w = s_1 s_2 (s_1 s_3)^{m/2}$, $w' = s_1 s_2(s_3 s_1)^{m/2-1}s_3$ and $w'' = s_1 s_2(s_3 s_1)^{m/2}$. If $m$ is odd, then $w = s_1 s_2(s_1 s_3)^{(m-1)/2} s_1$, $w' = s_1 s_2(s_3 s_1)^{(m-1)/2}$ and $w'' = s_1 s_2(s_3 s_1)^{(m-1)/2} s_3$. We shall use the straightforward equality $w=w''$ which holds for $w$ and $w''$ considered as group elements. One can also verify that in both cases $w, w' \in L(A)$ and $w'' \in L(B) \setminus L(A)$.

Let the word $w$ correspond to the directed path $\{\emptyset\} \rightarrow \{ \alpha_1 \} \rightarrow \{ \alpha_2 \} \rightarrow D_1 \rightarrow \dots \rightarrow D_k$, and the word $w'$ correspond to the directed path $\{\emptyset\} \rightarrow \{ \alpha_1 \} \rightarrow \{ \alpha_2 \} \rightarrow D'_1 \rightarrow \dots \rightarrow D'_{k-1}$ in $A$.
Then, let the graph $A'$ be obtained from $A$ in the following way, which is schematically illustrated in Figure~\ref{fig:automaton}:
\begin{itemize}
\item[1)] Add a number of states $D''_1$, $D''_2$, $\dots$, $D''_k$ to $A$, and create a directed path $ p =\{ \emptyset \} \rightarrow \{ \alpha_1 \} \rightarrow \{ \alpha_2 \} \rightarrow D''_1 \rightarrow \dots \rightarrow D''_{k-1} \to D_k$ in $A$ labelled with the sequence of letters in $w''$. Let $\varepsilon$ be the last edge of $p$.
\item[2)] Remove the transition $\{ \alpha_2 \} \rightarrow D_1'$ labelled by $s_3$, and for all $1 \leq i \leq k-1$ add $n-1$ transitions $D''_i \rightarrow \delta(D_i', s_j)$, where $s_j$ runs over all labels except one that is already used for the transition $D''_i\to D''_{i+1}$.
\item[3)] Let $A'$ be the subgraph in the automaton above spanned by the start state $\{ \emptyset \}$ together with the strongly connected component of
$\{ \alpha_1 \}$, which (by the fact that $A\setminus \{ \emptyset \}$ is strongly connected) is equivalent to removing all inaccessible states.
\end{itemize}

Let us define yet another automaton $A''$, which is obtained from $A'$ be removing the only transition $\varepsilon$. It follows from points (2)--(3) in the definition of $A'$ above that all the states $D''_i$, $k-1 \geq i \geq 1$ belong to the strongly connected component of $\{ \alpha_1 \}$, and thus we do not create any inaccessible states in $A''$ by removing $\varepsilon$ from $A'$.

Observe, that we have $L(A'') = L(A)$, since each word $u$ accepted by $A''$ can be split into two types of subwords: subwords read while traversing a sub-path of $p$, and subwords read while traversing paths that consist of the states of the original automaton $A$. However, each subword $v$ of $u$ obtained by traversing a subpath of $p$ can be obtained by traversing the states of $A$, since $v$ is a subword of $w''$, but $v\neq w$. Thus, $L(A'') \subset L(A)$. The inclusion $L(A) \subset L(A'')$ follows by construction.

On the other hand, $L(A) \subsetneq L(A') \subseteq L(B)$, since $w'' \in L(A')$, while $w'' \notin L(A)$.

From the above description, we obtain that $A'\setminus \{\emptyset\}$ and $A''\setminus \{\emptyset\}$ are both strongly connected. Then the transition matrices $M' = M(A'\setminus \{ \emptyset \})$ and $M'' = M(A'' \setminus \{ \emptyset \})$ are both irreducible.
Moreover, $M'$ and $M''$ have same size and $M' \neq M''$ dominates $M''$, since $A'$ and $A''$ have an equal number of states, while $A''$ has fewer transitions than $A'$. Then \cite[Corollary A.9]{B} implies that  $\lambda(A) = \lambda(A'') < \lambda(A') \leq \lambda(B)$, and thus $\omega(G) = \lambda(A) < \lambda(B) = \gamma(G)$.

Since Perron--Frobenius eigenvalues are simple, the quantities $w_n$ and $g_n$ asymptotically satisfy $w_n \sim \nu_1\cdot \omega(G)^n$ and $g_n \sim \nu_2\cdot \gamma(G)^n$, as $n\rightarrow \infty$, for some constants $\nu_1, \nu_2 > 0$. Then the remaining claim of the theorem follows. \qed

\section{Geometric applications}\label{sec:geom}

In this section we bring up some applications of our result to reflection groups that act discretely by isometries on hyperbolic space $\mathbb{H}^n$. A convex polytope $P \subset \mathbb{H}^n$, $n\geq 2$, is the intersection of finitely many geodesic half-spaces, i.e. half-spaces of $\mathbb{H}^n$ bounded by hyperplanes. A polytope $P \subset \mathbb{H}^n$ is called Coxeter if all the dihedral angles at which its facets intersect are of the form $\frac{\pi}{m}$, for integer $m\geq 2$.

The geometric Coxeter diagram $\mathcal{D}$ of $P$ is obtained by indexing its facets with a finite set of consecutive integers $F = $ $\{1$, $2$, $\dots \}$, and forming a labelled graph on the set of vertices $F$ as follows. If facets $i$ and $j$ intersect at an angle $\frac{\pi}{m_{ij}}$, then the vertices $i$ and $j$ are connected by an edge labelled $m_{ij}$, if $m_{ij} \geq 4$; by a single unlabelled edge, if $m_{ij}=3$; or no edge is present, if $m_{ij}=2$. If facets $i$ and $j$ are tangent at a point on the ideal boundary $\partial \mathbb{H}^n$, then $i$ and $j$ are connected by a bold edge. If the hyperplanes of $i$ and $j$ admit a common perpendicular, i.e. do not intersect in $\overline{\mathbb{H}^n} = \mathbb{H}^n \cup \partial \mathbb{H}^n$, then $i$ and $j$ are connected by a dashed edge.

It is known that a Coxeter polytope  $P\subset \mathbb{H}^n$ gives rise to a discrete reflection group generated by reflections in the hyperplanes of the facets of $P$. The group $G = G(P)$ generated by $P$ is a Coxeter group with standard generating set $S$ given by facet reflections. Then the word growth rate $\alpha(G)$ and geodesic growth rate $\gamma(G)$ with respect to $S$ are be defined as usual. The diagram $\mathcal{D}$ of $G$ as a Coxeter group can be obtained from the diagram of $P$ by converting all bold and dashed edges, if any, into $\infty$--edges.

Usually, the polytope $P$ is assumed to be compact or finite-volume, i.e. non-compact and such that its intersection with the ideal boundary $\partial \mathbb{H}^n$ consists only of a finite number of vertices. This condition can be relaxed in our case, since it does not particularly influence any of the statements below.

Since the facets of a Coxeter polytope $P\subset \mathbb{H}^n$ intersect if and only if their respective hyperplanes do \cite{Andreev}, then the number and incidence of $\infty$--edges in the diagram of $G = G(P)$ is determined only by the combinatorics of $P$.

The following two facts show that many Coxeter group acting on $\mathbb{H}^n$, $n\geq 2$, discretely by isometries have Perron numbers as their word and geodesic growth rates.

\begin{theorem}\label{thm:Coxeter1}
Let $P \subset \mathbb{H}^n$, $n\geq 2$, be a finite-volume Coxeter polytope, and $G$ its associated reflection group. If the bold and dashed edges in the diagram of $P$ form a connected subgraph, then $\alpha(G)$ and $\gamma(G)$ are Perron numbers.
\end{theorem}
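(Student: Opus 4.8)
The plan is to reduce Theorem~\ref{thm:Coxeter1} to Theorem~\ref{thm:Perron} by showing that the Coxeter diagram $\mathcal{D}$ of $G = G(P)$ is $\infty$-spanned whenever the bold and dashed edges of the geometric diagram of $P$ form a connected subgraph. First I would recall the dictionary between the two diagrams: passing from the geometric diagram of $P$ to the abstract Coxeter diagram $\mathcal{D}$ of $G$ turns every bold edge (ideal tangency) and every dashed edge (common perpendicular) into an $\infty$-edge, and leaves all other edges (labels $m_{ij}\geq 3$) untouched. So the hypothesis says precisely that the $\infty$-edges of $\mathcal{D}$ contain a connected subgraph $\Gamma_0$ whose vertex set is all of $F = \{1,2,\dots,n\}$; in other words $\Gamma_0$ is a connected \emph{spanning} subgraph of $\mathcal{D}$ consisting entirely of $\infty$-edges.

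Given such a connected spanning subgraph $\Gamma_0$, I would extract a spanning tree $\Gamma$ of $\Gamma_0$ by the standard graph-theoretic fact that every connected graph contains a spanning tree; all edges of $\Gamma$ are $\infty$-edges since all edges of $\Gamma_0$ are. The only remaining point is the rank restriction $n \geq 3$ in the definition of $\infty$-spanned. When $n \geq 3$ we are done: $\mathcal{D}$ is $\infty$-spanned by definition, so Theorem~\ref{thm:Perron} applies and gives that $\omega(G) = \alpha(G)$ and $\gamma(G)$ are Perron numbers. (Here I would note the harmless notational clash: the paper writes $\alpha(G)$ for the word growth rate in Section~\ref{sec:geom} and $\omega(G)$ earlier; these are the same quantity.) When $n \leq 2$: a rank-$1$ Coxeter group is $\mathbb{Z}_2$, which is finite and has trivial growth, and the statement is vacuous or trivial; a rank-$2$ group with an $\infty$-edge is the infinite dihedral group $\mathbb{Z}_2 * \mathbb{Z}_2$, whose word and geodesic growth series are both $\frac{1+z}{1-z}$, so both growth rates equal $1$, which we may regard as a (degenerate) Perron number, or simply exclude by the standing convention that polytopes of interest have $n\geq 2$ and are non-degenerate. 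I would dispatch these small cases in a sentence.

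The substantive content of the proof is therefore entirely encapsulated in the one implication ``bold/dashed edges connected and spanning $\Rightarrow$ $\mathcal{D}$ is $\infty$-spanned'', which is immediate from the definitions once the diagram dictionary is in place; there is no genuine obstacle, since all the analytic work — strong connectivity of the Brink--Howlett automata via Hydra's lemma, aperiodicity via the GCD lemma, and the Perron--Frobenius conclusion — has already been done in the proof of Theorem~\ref{thm:Perron}. If anything, the only place demanding a little care is making sure the hypothesis really yields a \emph{spanning} subgraph, i.e.\ that ``the bold and dashed edges form a connected subgraph'' is read as ``the subgraph they induce on $F$ is connected and uses every vertex'' rather than merely ``connected among the edges present''; I would state this reading explicitly. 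With that understood, the proof is a two-line reduction.

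\begin{proof}
By the description of how the abstract Coxeter diagram $\mathcal{D}$ of $G = G(P)$ is obtained from the geometric diagram of $P$, every bold edge and every dashed edge of the latter becomes an $\infty$-edge of $\mathcal{D}$. By hypothesis these edges form a connected subgraph on the full vertex set $F = \{1,\dots,n\}$, so the $\infty$-edges of $\mathcal{D}$ contain a connected spanning subgraph $\Gamma_0$. Choose a spanning tree $\Gamma \subseteq \Gamma_0$; all of its edges carry the label $\infty$. If $n \geq 3$, this exhibits $\mathcal{D}$ as an $\infty$-spanned diagram, and Theorem~\ref{thm:Perron} gives that $\alpha(G) = \omega(G)$ and $\gamma(G)$ are Perron numbers. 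If $n \leq 2$, then $G$ is either finite or the infinite dihedral group $\mathbb{Z}_2 * \mathbb{Z}_2$; in the latter case $\omega_G(z) = \gamma_G(z) = \tfrac{1+z}{1-z}$, so both growth rates equal $1$, and there is nothing further to prove.
\end{proof}
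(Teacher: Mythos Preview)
Your proposal is correct and follows exactly the paper's approach: the paper itself says only that ``Theorem~\ref{thm:Coxeter1} is just a restatement of Theorem~\ref{thm:Perron}'', and you have spelled out the (immediate) dictionary between bold/dashed edges and $\infty$-edges that makes this so. One small caveat: you conflate the ambient dimension $n$ of $\mathbb{H}^n$ with the rank of $G$ (the number of facets of $P$); a finite-volume polytope in $\mathbb{H}^n$, $n\geq 2$, always has at least $n+1\geq 3$ facets, so your low-rank digression is unnecessary---which is fortunate, since growth rate $1$ is \emph{not} a Perron number under the paper's definition ($\tau>1$).
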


The connectivity condition above can be checked for the diagram of $P$ relatively easily either by hand, for small diagrams, or by using a computer program, for larger ones. It is also clear that Theorem \ref{thm:Coxeter1} is just a restatement of Theorem \ref{thm:Perron}.

An additional fact holds as we compare the word and geodesic growth rates of Coxeter groups of the above kind.

\begin{theorem}\label{thm:Coxeter2}
Let $P \subset \mathbb{H}^n$, $n\geq 3$, be a finite-volume Coxeter polytope, and $G$ its associated reflection group. If the bold and dashed edges in the diagram of $P$ form a connected subgraph, then $\alpha(G) < \gamma(G)$.
\end{theorem}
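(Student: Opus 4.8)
The plan is to reduce Theorem~\ref{thm:Coxeter2} to Theorem~\ref{thm:geodesic} in exactly the same way that Theorem~\ref{thm:Coxeter1} is a restatement of Theorem~\ref{thm:Perron}. The hypothesis that the bold and dashed edges of the diagram of $P$ form a connected subgraph means, after converting all bold and dashed edges into $\infty$-edges, that the diagram $\mathcal{D}$ of $G = G(P)$ contains a connected subgraph on all $n$ vertices whose edges are all labelled $\infty$; that is, $\mathcal{D}$ has a spanning subtree with all edges labelled $\infty$, so $G$ is $\infty$-spanned (the case $n\geq 3$ guarantees the diagram has more than $2$ vertices, so the excluded degenerate two-vertex case cannot occur). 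Hence Theorem~\ref{thm:geodesic} applies to $G$ as soon as we check that $G$ is not a free product $\mathbb{Z}_2*\cdots*\mathbb{Z}_2$.

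First I would dispose of the free-product alternative. If $G = G(P)$ were a free product of copies of $\mathbb{Z}_2$, then every off-diagonal entry $m_{ij}$ of its Coxeter matrix would be $\infty$, i.e. every pair of facet hyperplanes of $P$ would be non-intersecting in $\overline{\mathbb{H}^n}$, and the diagram of $P$ would consist entirely of bold and dashed edges (a complete graph on $F$). For a finite-volume Coxeter polytope $P \subset \mathbb{H}^n$ with $n \geq 3$ this is impossible: a finite-volume polytope has at least $n+1 \geq 4$ facets, and at every ordinary (non-ideal) vertex of $P$ exactly $n$ facets meet, forcing the corresponding $n\times n$ principal submatrix of the Gram matrix to be positive definite and in particular to have finite entries $m_{ij}$ on the corresponding edges, contradicting that all $m_{ij} = \infty$. (Alternatively, one may simply observe that such a $P$ would be an intersection of pairwise non-intersecting half-spaces and hence could not cut out a region of finite volume with nonempty interior in $\mathbb{H}^n$.) So $G$ is an $\infty$-spanned Coxeter group that is not of the excluded form.

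Having established both hypotheses, Theorem~\ref{thm:geodesic} yields $g_k \sim \delta^k(G)\cdot w_k$ with $\delta(G) = \gamma(G)/\omega(G) > 1$, and in particular $\gamma(G) > \omega(G)$. Since $\alpha(G)$ (the word growth rate, written $\omega(G)$ in the earlier sections) and $\gamma(G)$ are computed with respect to the same standard generating set $S$ of facet reflections used throughout Section~\ref{sec:geom}, this is precisely the assertion $\alpha(G) < \gamma(G)$.

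The only genuine content beyond invoking Theorem~\ref{thm:geodesic} is the verification that $G$ is not a free product of copies of $\mathbb{Z}_2$, and that is where I expect the main (minor) obstacle to lie: one must make sure the argument covers non-compact and even infinite-volume-type relaxations allowed in the paragraph preceding the theorem, so the cleanest route is the volume/interior argument — a finite-volume hyperbolic polytope has nonempty interior and more than $n$ facets, and a region cut out by pairwise ultraparallel or ideally-tangent half-spaces in $\mathbb{H}^n$, $n\geq 3$, cannot have this property — rather than the Gram-submatrix argument, which is equivalent but requires one to recall Andreev/Vinberg-type facts about vertices of Coxeter polytopes. Everything else is bookkeeping identifying the notation $\alpha(G) = \omega(G)$ and matching the connectivity hypothesis to the definition of $\infty$-spanned, exactly as in the remark following Theorem~\ref{thm:Coxeter1}.
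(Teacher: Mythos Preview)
Your overall strategy matches the paper's exactly: verify that $G$ is $\infty$-spanned and not a free product of copies of $\mathbb{Z}_2$, then invoke Theorem~\ref{thm:geodesic}. The only difference is in the geometric lemma used to exclude the free-product case. The paper argues via \emph{edge} stabilisers rather than vertex stabilisers: a finite-volume Coxeter polytope in $\mathbb{H}^n$ is simple at edges, so every edge lies on exactly $n-1\geq 2$ facets, and the subdiagram on those facets must be spherical by Vinberg's criterion \cite[Theorem~4.1]{Vinberg}; a complete $\infty$-labelled graph on $\geq 2$ vertices is never spherical, contradiction. This sidesteps the ordinary-versus-ideal vertex issue you correctly flagged (your Gram-submatrix argument needs an ordinary vertex, which an ideal polytope need not have), and it is more self-contained than your alternative volume argument, which is true but would itself want a justification along the same Vinberg-criterion lines. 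In short: same reduction, same conclusion; the paper's choice of edge links in place of vertex links is what makes the exclusion step clean and uniform.
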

\begin{proof}
Let us notice that, unless $n=2$, it is impossible for a Coxeter polytope $P$ to have finite volume given that $\Gamma$ is a complete graph (in dimension $2$ we have an ideal triangle and its reflection group is isomorphic to the free product $C_2 * C_2 * C_2$). Indeed, let us consider an edge stabiliser of $P$. Since $P$ has finite volume, $P$ is \textit{simple at edges}, meaning that each edge is an intersection of $n-1$ facets. Then the edge stabiliser has a Coxeter diagram that is a subdiagram spanned by $n-1\geq 2$ vertices in the complete graph on $f$ vertices. Thus, it is itself a complete graph that has $\infty$--labels on its edges. This cannot be a diagram of a finite Coxeter group, hence Vinberg's criterion \cite[Theorem~4.1]{Vinberg} is not satisfied, and $P$ cannot have finite volume. Thus, $G$ cannot be a free product of finitely many copies of $C_2$, and the conditions of Theorem~\ref{thm:geodesic} are satisfied.
\end{proof}

As follows from the results by Floyd \cite{Floyd} and Parry \cite{Parry}, if $P$ is a finite-area polygon in the hyperbolic plane $\mathbb{H}^2$, the word growth rate $\alpha(G)$ of its reflection group $G$ is a Perron number. More precisely, $\alpha(G)$ is a Salem number if $P$ is compact, and a Pisot number if $P$ has at least one ideal vertex. A similar result holds for the geodesic growth rate $\gamma(G)$.

\begin{theorem}\label{thm:Coxeter3}
Let $P \subset \mathbb{H}^2$ be a finite-volume Coxeter polygon, and $G$ its associated reflection group. Then $\gamma(G)$ is also a Perron number whenever $P$ has more than $4$ vertices, or when $P$ is a quadrilateral with at least one ideal vertex, or a triangle with at least two ideal vertices. In all the above mentioned cases, $\gamma(G) > \alpha(G)$ unless $P$ is ideal.
\end{theorem}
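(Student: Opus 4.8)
The plan is to deduce Theorem~\ref{thm:Coxeter3} from Theorems~\ref{thm:Perron} and~\ref{thm:geodesic} by checking that in each of the three listed situations the reflection group $G = G(P)$ is $\infty$-spanned. The first step is to describe the Coxeter diagram $\mathcal{D}$ of $G$ for a finite-volume $k$-gon $P$. Its vertices are the $k$ sides of $P$; two consecutive sides share a vertex of $P$ and are joined in $\mathcal{D}$ by an edge carrying the label $m$ of their dihedral angle $\pi/m$, unless that vertex is ideal, in which case they are joined by an $\infty$-edge; two non-consecutive sides never meet, so by \cite{Andreev} their supporting hyperplanes are disjoint and they are always joined by an $\infty$-edge. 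Thus $\mathcal{D}$ is the cycle $C_k$ carrying suitable labels, together with all ``diagonals'' of $C_k$ labelled $\infty$; writing $E_\infty \subseteq E(\mathcal{D})$ for the set of $\infty$-edges, $E_\infty$ always contains all diagonals and, in addition, exactly one cycle edge for each ideal vertex of $P$.

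Next I would verify $\infty$-spannedness case by case, i.e. exhibit a spanning tree of $\mathcal{D}$ contained in $E_\infty$ (note $\mathcal{D}$ is connected, as it contains $C_k$, and $k \geq 3$ in all three cases). If $k \geq 5$, the diagonals alone form the graph $\overline{C_k}$, which is connected and hence contains a spanning tree; so $G$ is $\infty$-spanned with no hypothesis on ideal vertices. If $k = 4$, the diagonals form a matching $\{13,\,24\}$ on the four sides, and adjoining the single cycle edge supplied by the assumed ideal vertex produces a path on all four vertices, i.e. a spanning tree inside $E_\infty$. If $k = 3$, there are no diagonals, but the two cycle edges coming from the two assumed ideal vertices already form a spanning path on the three vertices. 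In all three cases $G$ is $\infty$-spanned, so Theorem~\ref{thm:Perron} gives that $\alpha(G) = \omega(G)$ and $\gamma(G)$ are Perron numbers.

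For the strict inequality I would apply Theorem~\ref{thm:geodesic}, whose only extra hypothesis is that $G$ is not a free product $\mathbb{Z}_2 * \dots * \mathbb{Z}_2$. If $P$ is not ideal, it has a finite vertex $v$, and the stabiliser of $v$ in $G$ is the dihedral group of order $2m \geq 4$ generated by the reflections in the two sides through $v$. By the Kurosh subgroup theorem, a free product of copies of $\mathbb{Z}_2$ has no finite subgroup of order exceeding $2$, so $G$ is not such a free product, and Theorem~\ref{thm:geodesic} yields $\gamma(G) > \omega(G) = \alpha(G)$. Conversely, if $P$ is ideal then every vertex is ideal, every cycle edge and every diagonal of $\mathcal{D}$ carries $\infty$, so $\mathcal{D} = K_k$ with all labels $\infty$ and $G \cong \mathbb{Z}_2^{*k}$, for which $\gamma(G) = \omega(G)$; this is precisely the excluded case.

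The only genuine content, and the \emph{point that must be handled carefully}, is the diagram analysis of the first two steps: determining exactly which pairs of sides give $\infty$-edges --- which rests on the geometric fact that non-adjacent sides of a convex polygon have disjoint supporting hyperplanes --- and then checking connectivity of the candidate spanning subgraph in the small cases $k = 3, 4$, where the hypotheses on the number of ideal vertices are exactly what is needed to repair the otherwise disconnected diagonal graph $\overline{C_k}$. Beyond this bookkeeping the statement is a direct consequence of the two main theorems, so I anticipate no serious obstacle.
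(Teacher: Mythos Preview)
Your proposal is correct and follows essentially the same approach as the paper: a case analysis on the number $k$ of sides, verifying that the $\infty$-edges span the diagram (diagonals alone suffice for $k\geq 5$, while the hypothesised ideal vertices supply the missing cycle edges for $k=3,4$), followed by an appeal to the two main theorems. The only cosmetic difference is that the paper phrases the free-product exclusion as ``the $\infty$-subgraph is complete iff $P$ is ideal'', whereas you reach the same conclusion via a dihedral vertex stabiliser and Kurosh; both are one-line observations, and your route has the minor advantage of invoking Theorem~\ref{thm:geodesic} directly rather than Theorem~\ref{thm:Coxeter2}, which as stated applies only in dimension $\geq 3$.
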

\begin{proof}
The proof proceeds case-by-case based on the number of sides of $P$.

\textit{$P$ is a triangle.} If $P$ has two or three ideal vertices, then the subgraph of bold edges in the diagram of $D$ is connected. This subgraph is complete if and only if $P$ is an ideal triangle.

\textit{$P$ is a quadrilateral.} If $P$ has at least one ideal vertex, then the subgraph of bold and dashed edges in the diagram of $G$ is connected. This subgraph is complete if and only if $P$ is an ideal quadrilateral.

\textit{$P$ has $n\geq 5$ sides.} In this case, each vertex in the diagram of $G$ is connected by dashed edges to $n-3$ other vertices. It can be also connected by bold edges to one or two more vertices, depending on $P$ having vertices on the ideal boundary $\partial \mathbb{H}^2$. Provided the vertex degrees, it is clear that the subgraph of bold and dashed edges in $D$ is connected. This subgraph is complete if and only if each vertex in the diagram of $D$ is connected to $n-3$ vertices by dashed edges, and to two more vertices by bold edges. In this case, $P$ is an ideal $n$--gon.

Having described the cases above, the theorem follows from Theorems \ref{thm:Coxeter1} -- \ref{thm:Coxeter2}.
\end{proof}

Another series of examples where Theorems \ref{thm:Coxeter1} -- \ref{thm:Coxeter2} apply arises in $\mathbb{H}^3$: these are the right-angled L\"obell polyhedra originally described in \cite{Loebell} and their analogues with the same combinatorics but various Coxeter angles \cite{BMV, V}. The latter polyhedra can be obtained from the L\"obell ones by using ``edge contraction'', c.f. \cite[Propositions 1 -- 2]{K}. A few examples also come from hyperbolic Coxeter groups associated with quadratic integers \cite{JohnsonWeiss}.

The word growth rates of their associated reflection groups are Perron numbers by \cite{Yu1, Yu2}, and their geodesic growth rates are Perron numbers by Theorem~\ref{thm:Coxeter1}. Indeed, any Coxeter polyhedron $P$ polyhedron combinatorially isomorphic to a L\"obell polyhedron $L_n$ has the following property: each of its faces has at most $n$ neighbours, while $L_n$ has $2n+2$ faces in total. This implies that there are enough common perpendiculars in between its faces to keep the subgraph of dashed edges in the Coxeter diagram of $P$ connected. Also, Theorem~\ref{thm:Coxeter2} implies that the geodesic growth rates always strictly dominate the respective word growth rates.

\begin{figure}[ht]
\centering
\includegraphics[width=0.9\textwidth]{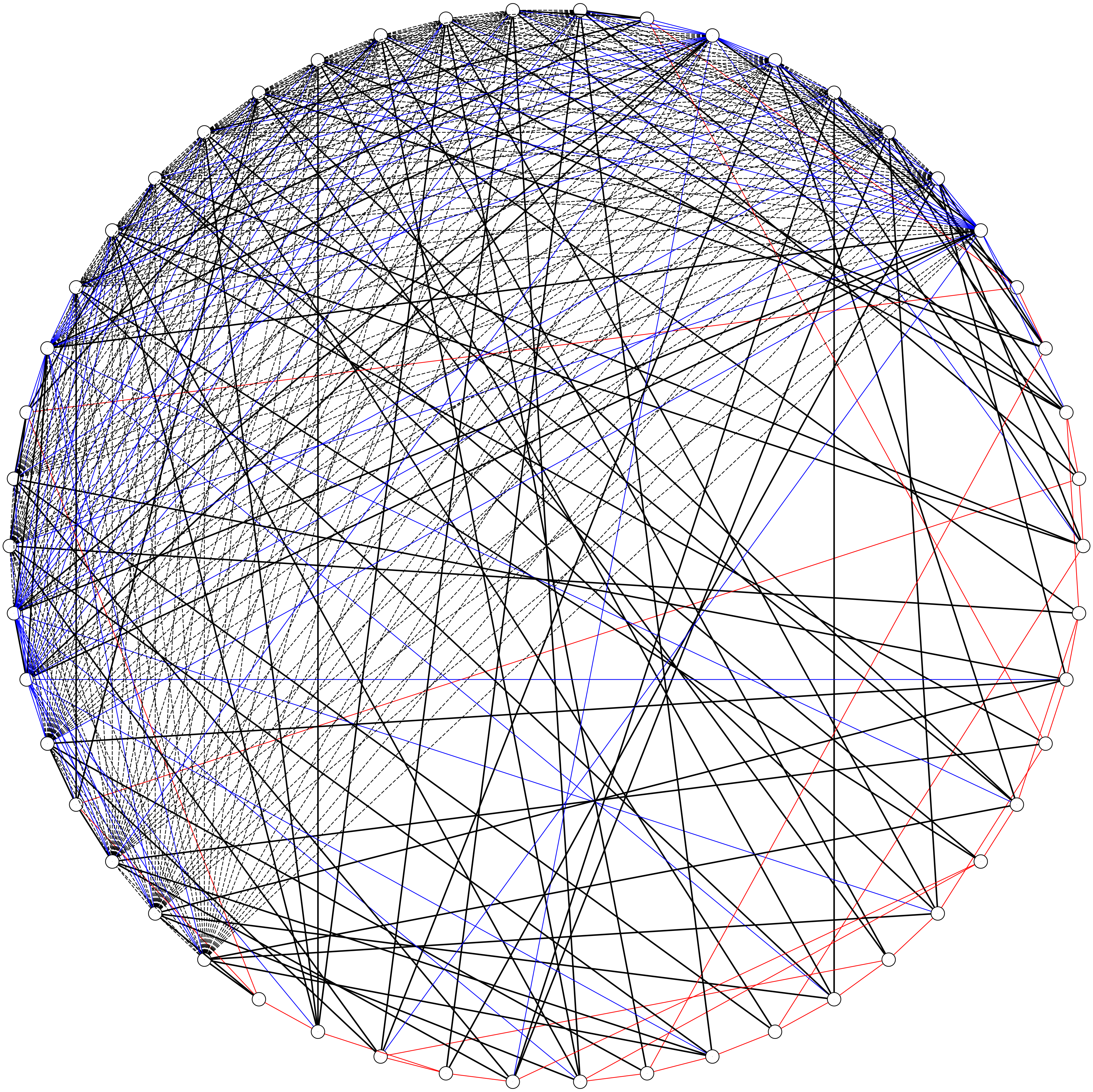}
\caption{A finite-volume non-compact Coxeter polytope in $\mathbb{H}^{19}$ }\label{fig:polytope-19}
\end{figure}

In Figure~\ref{fig:polytope-19}, we present a complete Coxeter diagram of the hyperbolic finite-volume polytope $P$ in $\mathbb{H}^{19}$ discovered by Kaplinskaya and Vinberg in \cite{KapVin}. The reflection group $G$ associated with $P$ corresponds to a finite index subgroup in the group of integral Lorentzian matrices preserving the standard hyperboloid $H = \{(x_0, x_1, \ldots, x_{19}) \in \mathbb{R}^{20} \,\, | \,\, -x^2_0 + x^2_1 + \ldots + x^2_{19} = -1, \,\, x_0 > 0\}$. The latter group is isomorphic to $G \rtimes S_5$, where $S_5$ is the symmetric group on $5$ elements. The diagram in Figure~\ref{fig:polytope-19} was obtained by using AlVin \cite{Guglielmetti-1, Guglielmetti-2} software implementation of Vinberg's algorithm \cite{Vinberg-algorithm}. The picture does not exhibit the $S_5$ symmetry but rather renders the edges as sparsely placed as possible in order to let the connectivity properties of the graph be observed.

The dashed edges correspond to common perpendiculars between the facets, and bold edges correspond to facets tangent at the ideal boundary $\partial \mathbb{H}^{19}$. The blue edges have label $4$, and the red ones have label $3$ (because of the size of the diagram, this colour notation seems to us visually more comprehensible).

Checking that the subgraph of bold and dashed edges in the diagram of $P$ is connected can be routinely done by hand or by using a simple computer program. Then Theorems \ref{thm:Coxeter1} -- \ref{thm:Coxeter2} apply.  We would like to stress the fact that checking whether the word and geodesic growth rates of $G$ satisfy the conclusions of Theorems \ref{thm:Coxeter1} -- \ref{thm:Coxeter2} by direct computation would be rather tedious, especially for the geodesic growth rate.

\addcontentsline{toc}{section}{References}

\footnotesize{
Authors' affiliations:\\

Alexander Kolpakov\\
Institut de math\'ematiques, Rue Emile-Argand 11, 2000 Neuch\^atel, Switzerland\\
Laboratory of combinatorial and geometric structures, Moscow Institute of Physics \\ and Technology, Dolgoprudny, Russia\\
kolpakov (dot) alexander (at) gmail (dot) com\\

Alexey Talambutsa\\
Steklov Mathematical Institute of RAS, 8 Gubkina St., 119991 Moscow, Russia\\
HSE University, 11 Pokrovsky Blvd., 109028 Moscow, Russia\\
alexey (dot) talambutsa (at) gmail (dot) com

}

\end{document}